\documentclass[12pt,twoside,reqno]{amsart}
\linespread{1.05}
\usepackage[colorlinks=true,citecolor=blue]{hyperref}
\usepackage{mathptmx, amsmath, amssymb, amsfonts, amsthm, mathptmx, enumerate, color,mathrsfs}
\setlength{\textheight}{23cm}
\setlength{\textwidth}{16cm}
\setlength{\oddsidemargin}{0cm}
\setlength{\evensidemargin}{0cm}
\setlength{\topmargin}{0cm}

\usepackage{graphicx}

\usepackage{epstopdf}

\newtheorem{theorem}{Theorem}[section]

\newtheorem{proposition}[theorem]{Proposition}

\theoremstyle{definition}
\newtheorem{definition}[theorem]{Definition}

\newtheorem{remark}[theorem]{Remark}
\numberwithin{equation}{section}

\begin{document}
\setcounter{page}{1}

\vspace*{2.0cm}
\title[Relationship between nonsmooth VOP and VVI ]
{Interval Valued Vector Variational Inequalities and Vector Optimization Problems via Convexificators}
\author[R.K.Bhardwaj, T. Ram] {Rohit Kumar Bhardwaj$^1$,Tirth Ram$^{1}$}
\maketitle
\vspace*{-0.6cm}

\begin{center}
{\footnotesize

$^1$Department of Mathematics, University of Jammu, Jammu- 180006, India

}\end{center}

\vskip 4mm {\footnotesize \noindent {\bf Abstract.}
 In this paper, we consider interval-valued vector optimization problems $(IVOP)$ and derive their relationships to interval vector variational inequalities $(IVVI)$ of Minty and Stampacchia type in terms of convexificators and LU-efficient solution of $(IVOP)$ using LU-convexity assumption. Furthermore, we consider weak versions of $(IVVI)$ of Minty and Stampacchia type and find the relationships between weak versions of $(IVVI)$ of Minty and Stampacchia type and weakly LU-efficient solution of $(IVOP)$. The results presented in this paper extend and generalized some existing results in the literature.}\\

 \noindent {\bf Keywords:}
Convexificator, Interval-valued vector optimization problem, LU-efficient solution.\\

 \noindent {\bf 2010 Mathematics Subject Classification:} 49J40, 49J52, 58E17.

\renewcommand{\thefootnote}{}
\footnotetext{ $^*$Corresponding author.
\par
E-mail addresses: rohitbhardwaj470@gmail.com (R.K. Bhardwaj), tir1ram2@yahoo.com (T. Ram).
\par
Received February 00, 2022; Accepted February 00, 2022. }

\section{Introduction}
One of the deterministic optimization models that can be used to deal with uncertain data is a problem known as an interval-valued optimization problem. There are three basic ways to describe constrained optimization with uncertainty, which are referred to as the stochastic programming approach, the fuzzy programming approach, and the interval-valued programming approach, see for example \cite{RAH,SLO,WU1}. To find solutions to these problems, numerous techniques have been established. Stochastic and fuzzy optimization problems, on the other hand, are notoriously difficult to resolve. In the method of optimization known as the interval-valued optimization problem, the coefficients of both the objective and constraint functions are represented by closed intervals. As a result, the solution to the stochastic or fuzzy optimization problem will be more difficult to achieve than the solution to the $(IVOP)$. This is the primary reason why the $(IVOP)$ has recently attracted increased interest in the optimization community, see for example \cite{ISH,JEN1,KUM,OSU1,QIA,SIN1,SIN2} and the references contained therein for more information.

In optimization theory, nonsmooth occurrences frequently occur, which has prompted the development of several subdifferential and generalized directional derivative notions. A generalization of plenty of well subdifferentials, particularly Mordukhovich \cite{MOR}, Michel-Penot \cite{MIC}, and Clarke \cite{CLA} subdifferentials is the idea of a convexificator. It has been demonstrated that the idea of convexificators is a helpful tool in the field of nonsmooth optimization. The concept of a convexificator was proposed by Demyanov \cite{DEM} in the year 1994. Convexificators were recently employed by Golestani and Nobakhtian \cite{GOL}, Li and Zhang \cite{LIZ}, Long and Huang \cite{LON} and Luu \cite{LUU} to create the ideal circumstances for nonsmooth optimization problems. We refer to \cite{DEM2,DEM1,KHA,KKL,LII,LUU1,UPA2}, and its sources for further details on convexificators. \\

In 1980, Giannessi \cite{GIA} introduced the notion of vector variational inequalities. Vector variational inequalities have wider applications in optimization, optimal control, economics and equilibrium problems, see, for example \cite{DAF,GIA1,KIN} and the references cited therein. Laha et al. \cite{LAH} define Stampacchia and Minty vector variational inequalities $(VVI)$ in terms of convexificators and use them to identify necessary and sufficient criteria for a point to be a vector minimum point of the $(VOP)$. Mishra and Upadhyay \cite{MIS8} and Upadhyay et al. \cite{UPA3} demonstrated links between nonsmooth $(VOP)$ and $(VVI)$. Zhang et al. \cite{ZHA1,ZHA} investigated optimality conditions for real-valued functions under the assumption of LU-convexity as an extension of convexity. Motivated and inspired by ongoing research work, we adapt the concept of LU-convex function and generalize it to interval-valued vector function. Afterwards, we will use these concepts as a tool to find the relationship between $(IVOP)$ and $(VVI)$ of Stampacchia and Minty types.\\

 The rest sections of this paper are organized as follows: In section 2, some basic definitions and preliminaries are given. In section 3, basic properties and arithmetic for intervals are given. In section 4, we derive relationships among $(IVOP)$, $(IVVI)$ of Minty and Stampacchia type in terms of convexificators and LU-efficient solution of $(IVOP)$ using LU-convexity assumption. Finally we conclude our work in section 5.
	
\section{Preliminaries}

Throughout this paper, let us assume that $\mathbb{R}^{n}$ is the n-dimensional Euclidean space, $\mathbb{R}^{n}_{+}$ be its nonnegative orthant, and $int\mathbb{R}^{n}_{+}$ be the positive orthant of $\mathbb{R}^{n}$. Let the notation $\overline{\mathbb{R}}=\mathbb{R}\cup\{\infty\}$ signify the extended real line, and the notation $\left\langle .,. \right\rangle$ denote the Euclidean inner product. Further, we will assume that $0\neq D\subseteq\mathbb{R}^{n}$ that also contains the Euclidean norm $\|.\|$.\\

The convention for equality and inequalities is as follows:\\
 If $\upsilon,\omega\in \mathbb{R}^{n}$, then\\
$\upsilon\geqq \omega ~\Leftrightarrow~ \upsilon_{j}\geq \omega_{j},~j=1,2,3,...,n$;\\
$\upsilon>\omega ~\Leftrightarrow~ \upsilon_{j}>\omega_{j},~j=1,2,3,...,n$;\\
$\upsilon\geq \omega ~\Leftrightarrow~ \upsilon_{j}\geq \omega_{j},~j=1,2,3,...,n$, but $\upsilon\neq \omega$;\\
$\upsilon\leqq \omega ~\Leftrightarrow~ \upsilon_{j}\leq \omega_{j},~j=1,2,3,...,n$;\\
$\upsilon<\omega ~\Leftrightarrow~ \upsilon_{j}<\omega_{j},~j=1,2,3,...,n$;\\
$\upsilon\leq \omega ~\Leftrightarrow~ \upsilon_{j}\leq \omega_{j},~j=1,2,3,...,n$, but $\upsilon\neq \omega$;\\

First of all, we recall some definitions from \cite{JEY} as follows:
\begin{definition} 
Suppose $\Gamma:D\rightarrow \overline{\mathbb{R}}$ be an extended real valued function, $\upsilon\in D$ and $\Gamma(\upsilon)$ be finite. Then, the {\it lower and upper Dini derivatives} of $\Gamma$ at $\upsilon\in D$ in the direction $\omega\in \mathbb{R}^{n},$ are denoted and defined as follows:
$$\Gamma^{-}(\upsilon,\omega)= \liminf \limits_{\lambda \rightarrow 0}\frac{\Gamma(\upsilon+\lambda \omega)-\Gamma(\upsilon)}{\lambda}.$$
$$\Gamma^{+}(\upsilon,\omega)= \limsup \limits_{\lambda \rightarrow 0}\frac{\Gamma(\upsilon+\lambda \omega)-\Gamma(\upsilon)}{\lambda}.$$
\end{definition}

\begin{definition} 
Suppose $\Gamma:D\rightarrow \overline{\mathbb{R}}$ be an extended real valued function, $\upsilon\in D$ and $\Gamma(\upsilon)$ be finite. Then $\Gamma$ is said to be:
\begin{itemize}
 \item[{(i)}]{\it an upper convexificator} $\partial^{*}\Gamma(\upsilon)\subseteq \mathbb{R}^{n}$ at $\upsilon\in D$, if and only if $\partial^{*}\Gamma(\upsilon)$ is closed and for every $\omega\in \mathbb{R}^{n}$, we have
$$\Gamma^{-}(\upsilon,\omega)\leq \sup \limits_{\zeta \in \partial^{*}\Gamma(\upsilon)} \left\langle \zeta,\omega \right\rangle, $$
 \item[{(ii)}]{\it a lower convexificator} $\partial_{*}\Gamma(\upsilon)\subseteq \mathbb{R}^{n}$ at $\upsilon\in D$, if and only if $\partial_{*}\Gamma(\upsilon)$ is closed and for every $\omega\in \mathbb{R}^{n}$, we have
$$\Gamma^{+}(\upsilon,\omega)\geq \inf \limits_{\zeta \in \partial_{*}\Gamma(\upsilon)} \left\langle \zeta,\omega \right\rangle, $$
 \item[{(iii)}]{\it a convexificator} $\partial^{*}_{*}\Gamma(\upsilon)\subseteq \mathbb{R}^{n}$ at $\upsilon\in D$, if and only if $\partial^{*}_{*}\Gamma(\upsilon)$ is both upper and lower convexificator of $\Gamma$ at $\upsilon$.\\
That is,  for every $\omega\in \mathbb{R}^{n}$, we have
$$\Gamma^{-}(\upsilon,\omega)\leq \sup \limits_{\zeta \in \partial^{*}_{*}\Gamma(\upsilon)} \left\langle \zeta,\omega \right\rangle,~~~\Gamma^{+}(\upsilon,\omega)\geq \inf \limits_{\zeta \in \partial_{*}^{*}\Gamma(\upsilon)} \left\langle \zeta,\omega \right\rangle.$$
\end{itemize}
\end{definition}


\begin{theorem} \cite{JEY} \label{T1}
Suppose $a,b\in D$ and $\Gamma:D\rightarrow \overline{\mathbb{R}}$ be finite and continuous on $(a,b)$. Suppose $\partial^{*}_{*}\Gamma(\omega)$ is a bounded convexificator for all $\omega\in [a,b]$. Then there exists $c\in(a,b)$ such that
$$\Gamma(b)-\Gamma(a)=\left\langle \zeta,b-a \right\rangle,~\mbox{for}~\zeta\in \partial^{*}_{*}\Gamma(\omega).$$ 
\end{theorem}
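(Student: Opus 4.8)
The plan is to follow the classical Jeyakumar–Luc strategy: reduce the statement to a Rolle-type argument on the segment $[a,b]$ by subtracting off the affine interpolant, and then convert the resulting extremal first-order information into the convexificator inequalities of Definition 2.2. Concretely, set $d:=b-a$ and introduce the auxiliary function
$$\psi(\omega):=\Gamma(\omega)-\frac{\Gamma(b)-\Gamma(a)}{\|b-a\|^{2}}\,\langle \omega-a,\,d\rangle,\qquad \omega\in[a,b].$$
A direct computation gives $\psi(a)=\Gamma(a)=\psi(b)$, so $\psi$ takes equal values at the endpoints. Writing $\mu:=(\Gamma(b)-\Gamma(a))/\|b-a\|^{2}$, the function $\psi$ differs from $\Gamma$ by an affine term with constant gradient $\mu d$, so the Dini derivatives transform by $\psi^{\pm}(\omega,v)=\Gamma^{\pm}(\omega,v)-\mu\langle d,v\rangle$; consequently $\partial^{*}_{*}\psi(\omega)=\partial^{*}_{*}\Gamma(\omega)-\mu d$ remains a bounded convexificator at every $\omega\in[a,b]$.

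First I would locate an interior extremum. Since $\Gamma$ is finite and continuous on $(a,b)$ and the correction term is continuous, $\psi$ is continuous on the compact segment $[a,b]$ and therefore attains both its maximum and its minimum there. If $\psi$ is constant then $\Gamma(b)-\Gamma(a)=\mu\|b-a\|^{2}$ and any interior $c$ works; otherwise, because $\psi(a)=\psi(b)$, at least one of the two extrema is attained at an interior point $c\in(a,b)$. Suppose it is the minimum (the maximum case is entirely symmetric, using the lower-convexificator inequality with $\psi^{+}$ in place of the upper one with $\psi^{-}$).

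The heart of the matter is to translate ``$c$ minimizes $\psi$ along the segment'' into a statement about $\partial^{*}_{*}\Gamma(c)$. Testing the two admissible directions $d$ and $-d$ at the interior minimizer $c$, minimality forces $\psi^{-}(c,d)\geq 0$ and $\psi^{-}(c,-d)\geq 0$. Applying the upper-convexificator inequality to each and using the identity $\langle\xi,d\rangle=\langle\zeta,d\rangle-(\Gamma(b)-\Gamma(a))$ for $\xi=\zeta-\mu d$ with $\zeta\in\partial^{*}_{*}\Gamma(c)$, these collapse to the two-sided estimate
$$\inf_{\zeta\in\partial^{*}_{*}\Gamma(c)}\langle\zeta,\,b-a\rangle\;\leq\;\Gamma(b)-\Gamma(a)\;\leq\;\sup_{\zeta\in\partial^{*}_{*}\Gamma(c)}\langle\zeta,\,b-a\rangle.$$

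The main obstacle is upgrading this sandwich to the exact equality $\Gamma(b)-\Gamma(a)=\langle\zeta,b-a\rangle$ for an actual element $\zeta$. Boundedness of the convexificator guarantees that $\{\langle\zeta,b-a\rangle:\zeta\in\partial^{*}_{*}\Gamma(c)\}$ is a bounded subset of $\mathbb{R}$ whose infimum and supremum bracket $\Gamma(b)-\Gamma(a)$; to realize this intermediate value one passes to the closed convex hull $\mathrm{co}\,\partial^{*}_{*}\Gamma(c)$, on which $\zeta\mapsto\langle\zeta,b-a\rangle$ sweeps out the whole interval $[\inf,\sup]$ by the intermediate value theorem, producing the desired $\zeta$. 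Without convexity of the raw set the value need only lie between the two bounds, which is why the sharp equality is most naturally read in the convex-hull sense. I expect the clean transport of the convexificator under the affine shift and this final intermediate-value extraction to be the two points that require the most care.
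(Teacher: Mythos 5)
The paper offers no proof of this statement: it is quoted (in slightly garbled form) from \cite{JEY}, and your argument is essentially the canonical Jeyakumar--Luc proof of that source --- subtract the affine interpolant so that $\psi(a)=\psi(b)$, locate an interior extremum on the segment, feed $\psi^{-}(c,\pm d)\geq 0$ into the upper-convexificator inequality (resp.\ $\psi^{+}(c,\pm d)\leq 0$ into the lower one at a maximizer), and extract $\zeta$ by an intermediate-value argument --- so your route and the intended one coincide. Two substantive remarks. First, your closing caveat is not merely a stylistic point: the literal statement in the paper is false as written, since the equality $\Gamma(b)-\Gamma(a)=\left\langle \zeta,b-a \right\rangle$ can in general only be realized for $\zeta\in \mathrm{co}\,\partial^{*}_{*}\Gamma(c)$ (closedness plus boundedness of $\partial^{*}_{*}\Gamma(c)$ make $\mathrm{co}\,\partial^{*}_{*}\Gamma(c)$ compact, so the image of $\zeta\mapsto\left\langle \zeta,b-a \right\rangle$ is the closed interval $[\inf,\sup]$ and the endpoint cases are attained); this convex-hull version is also what the paper actually uses later, where the mean value theorem produces $\bar{\zeta^{L}_{i}}\in co\,\partial^{*}_{*}\Gamma_{i}(\mu(\bar{t_{i}}))$, and note too that the statement's ``$\zeta\in\partial^{*}_{*}\Gamma(\omega)$'' should read ``$\zeta\in \mathrm{co}\,\partial^{*}_{*}\Gamma(c)$''. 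Second, your extreme-value step quietly assumes $\psi$ is continuous on the closed segment $[a,b]$, which the hypothesis ``continuous on $(a,b)$'' does not literally give; this is a transcription defect of the paper (the original result assumes continuity on $[a,b]$), but you should state explicitly that you are using the corrected hypothesis, and likewise that your inequalities $\psi^{-}(c,\pm d)\geq 0$ presuppose the one-sided reading $\lambda\downarrow 0$ of the Dini derivatives, which is the intended one even though the paper writes $\lambda\to 0$. With those two repairs made explicit, your proof is complete and proves the correct (convex-hull) form of the theorem.
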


\begin{definition} \cite{MIS7}
Suppose $\Gamma:X\subseteq \mathbb{R}^{n}\rightarrow \mathbb{R}$ be a function. Then a point $\omega\in X$ is said to be:
\begin{itemize}
\item[{(i)}] efficient (Pareto), iff there exists no $\upsilon\in X$ such that $\Gamma(\upsilon)\leq \Gamma(\omega)$.\\
\item[{(ii)}] weakly efficient (Pareto), iff there exists no $\upsilon\in X$ such that $\Gamma(\upsilon)<\Gamma(\omega)$.
\end{itemize}	
\end{definition}

The notion of convexity for locally Lipschitz vector-valued functions using convexificators is defined as:

\begin{definition}  \cite{LAH}
Suppose $\Gamma=(\Gamma_{1},\Gamma_{2},...,\Gamma_{p}):D\rightarrow \mathbb{R}^{p}$ be a vector-valued function such that $\Gamma_{i}:D\rightarrow \mathbb{R}$ is locally Lipschitz at $\omega\in D$ and admits a bounded convexificator $\partial^{*}_{*}\Gamma(\omega)$ at $\omega$ for all $i\in\ell$. Then $\Gamma$ is said to be:
\begin{itemize}
\item[{(i)}] $\partial^{*}_{*}$-LU-convex at $\omega\in D$ if  
$$\Gamma(\upsilon)-\Gamma(\omega)\geqq \left\langle \zeta,\upsilon-\omega \right\rangle_{p},~\forall~\upsilon\in D,~\zeta\in \partial^{*}_{*} \Gamma(\omega).$$

\item[{(ii)}] strictly $\partial^{*}_{*}$-LU-convex at $\omega_{0}\in D$ if  
$$\Gamma(\upsilon)-\Gamma(\omega) > \left\langle \zeta,\upsilon-\omega \right\rangle_{p},~\forall~\upsilon\in D,~\zeta\in \partial^{*}_{*} \Gamma(\omega).$$
\end{itemize}	
\end{definition}

\section{Interval-valued vector functions}

First, we review several fundamental operations that can be performed at real intervals. For further information on interval analysis, we refer to \cite{MOO1,MOO2}. Let's denote the set of all closed intervals in $\mathbb{R}$ by $\Re$. Suppose $X=[x^{L},x^{U}],~Y=[y^{L},y^{U}]\in \Re$, then the sum and the product are defined by
$$X+Y=\{x+y:x\in X,~y\in Y\}=[x^{L}+y^{L},x^{U}+y^{U}],$$
$$X\times Y=\{xy:x\in X,~y\in Y\}=[min~Q,max~Q],$$
where $Q=\{x^{U}y^{U},x^{U}y^{L},x^{L}y^{U},x^{L}y^{L}\}$. It is important to note that any real number $x$ can be interpreted as the closed interval $X_{x}=[x,x]$, which means that the sum of $x+Y$ is $X_{x}+Y$.\\
Based on the previous procedures, we can describe the product that results from multiplying an interval by a real number $\alpha$ as
$$\alpha X=\{\alpha x:x\in X\}=\left\{\begin{array}{cc}
           [\alpha x^{L},\alpha x^{U}],& \mbox{if}~~\alpha \geq 0,\\
					 \left[\alpha x^{U},\alpha x^{L}\right],& \mbox{if}~~\alpha<0.				
					\end{array}\right.$$			
Note that $-X=\{-x:x\in X\}=[-x^{U},-x^{L}]$. Thus the difference between the two sets will be defined as
$$X-Y=X+(-Y)=[x^{L}-x^{U},x^{U}-x^{L}].$$				
For intervals, an order relation can be defined as	
\begin{itemize}

\item[{(1)}] $X\preceq_{LU} Y~\Longleftrightarrow~x^{L}\leq y^{L}$ and $x^{U}\leq y^{U},$

\item[{(2)}] $X\prec_{LU} Y~\Longleftrightarrow~X\preceq_{LU} Y$ and $X\neq Y,$ that is one of following holds: 
\begin{itemize}
\item[{(a)}] $x^{U}<y^{U}$ and $x^{L}<y^{L}$, or

\item[{(b)}] $x^{U}<y^{U}$ and $x^{L}\leq y^{L}$, or

\item[{(c)}] $x^{U}\leq y^{U}$ and $x^{L}<y^{L}$.
\end{itemize}						

\end{itemize}				
					
\begin{remark}										
Suppose $X=[x^{L},x^{U}],~Y=[y^{L},y^{U}]\in \Re$, then $X$ and $Y$ are said to be comparable if $X\preceq_{LU} Y$ or $X\succeq_{LU} Y$.\\
If any of the following is true, then $X$ and $Y$ cannot be compared to one another:
$$x^{U}>y^{U}~\mbox{and}~x^{L}<y^{L};~ x^{U}\geq y^{U}~\mbox{and}~x^{L}<y^{L};~x^{U}>y^{U}~\mbox{and}~x^{L}\leq y^{L};$$	
$$x^{U}<y^{U}~\mbox{and}~x^{L}>y^{L};~ x^{U}\leq y^{U}~\mbox{and}~x^{L}>y^{L};~x^{U}<y^{U}~\mbox{and}~x^{L}\geq y^{L}.$$	
\end{remark}						

Suppose $X=(X_{1},X_{2},...,X_{n})$ be an interval-valued vector, where every component $X_{k}=[x_{k}^{L},x_{k}^{U}],~k=1,2,...,n$ is a closed interval. We take into consideration two interval-valued vectors denoted by $X=(X_{1},X_{2},...,X_{n})$ and $Y=(Y_{1},Y_{2},...,Y_{n})$ in such a way that $X_{k}$ and $Y_{k}$ are comparable for all $k$ values ranging from $1$ to $n$, then
\begin{itemize}
\item[{(a)}] $X\preceq_{LU} Y$ if $X_{k}\preceq_{LU} Y_{k}$ for all $k=1,2,...,n$,

\item[{(b)}] $X\prec_{LU} Y$ if $X_{k}\preceq_{LU} Y_{k}$ for all $k=1,2,...,n$, and $X_{i}\prec_{LU} Y_{i}$ for at least one $i$.
\end{itemize}			

A function $\Gamma:D\rightarrow \Re$ is said to be interval-valued function if $\Gamma(\omega)=[\Gamma^{L}(\omega),\Gamma^{U}(\omega)]$, where $\Gamma^{L}$ and $\Gamma^{U}$ are real-valued functions defined on $D$ satisfying $\Gamma^{L}(\omega)\leq \Gamma^{U}(\omega)$, for every $\omega\in D$. If $\Gamma_{1},\Gamma_{2},...,\Gamma_{p}:D\rightarrow \Re$ are $p$ interval-valued functions, then we refer to the function $\Gamma=(\Gamma_{1},\Gamma_{2},...,\Gamma_{p}):D\rightarrow \Re^{p}$ an interval-valued vector function. 

\begin{definition} \cite{ZHA}
Suppose $\Gamma=[\Gamma^{L},\Gamma^{U}]:D\rightarrow \Re$ be interval-valued function, then $\Gamma$ is said to be locally Lipschitz at $\omega_{0}\in D$ w.r.t. the Hausdorff metric if there exists $M>0$ and $\delta>0$ such that 
$$d_{H}(\Gamma(\omega),\Gamma(\upsilon))\leq M\|\omega-\upsilon\|,$$
where $d_{H}(\Gamma(\omega),\Gamma(\upsilon))$ is the Hausdorff metric between $\Gamma(\omega)$ and $\Gamma(\upsilon)$, defined by
$$d_{H}(\Gamma(\omega),\Gamma(\upsilon))=max\{|\Gamma(\omega)^{L}-\Gamma(\upsilon)^{L}|,|\Gamma(\omega)^{U}-\Gamma(\upsilon)^{U}|\}.$$
\end{definition}			
If every $\omega_{0}\in D$ is Lipschitz, then $f$ is locally Lipschitz on $D$.

\begin{proposition} \cite{ZHA}
Suppose $\Gamma=[\Gamma^{L},\Gamma^{U}]:D\rightarrow \Re$ be a locally Lipschitz on $D$, then $\Gamma^{L}$ and $\Gamma^{U}$ both are locally Lipschitz on $D$.
\end{proposition}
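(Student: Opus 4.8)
The plan is to unwind the definition of local Lipschitz continuity with respect to the Hausdorff metric and exploit the explicit formula
$$d_{H}(\Gamma(\omega),\Gamma(\upsilon))=\max\{|\Gamma^{L}(\omega)-\Gamma^{L}(\upsilon)|,|\Gamma^{U}(\omega)-\Gamma^{U}(\upsilon)|\}$$
recorded in the preceding definition. The essential observation is that a maximum of two nonnegative quantities dominates each of them individually, so any Lipschitz bound on $d_{H}$ is automatically a Lipschitz bound on each endpoint difference, and indeed with the \emph{same} constant and the \emph{same} neighborhood.

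First I would fix an arbitrary point $\omega_{0}\in D$. Since $\Gamma$ is locally Lipschitz at $\omega_{0}$, there exist $M>0$ and $\delta>0$ such that
$$d_{H}(\Gamma(\omega),\Gamma(\upsilon))\leq M\|\omega-\upsilon\|$$
for all $\omega,\upsilon$ in the $\delta$-neighborhood of $\omega_{0}$. Next I would apply the elementary inequalities $|a|\leq\max\{|a|,|b|\}$ and $|b|\leq\max\{|a|,|b|\}$ with the choices $a=\Gamma^{L}(\omega)-\Gamma^{L}(\upsilon)$ and $b=\Gamma^{U}(\omega)-\Gamma^{U}(\upsilon)$. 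Substituting the formula for $d_{H}$ then yields
$$|\Gamma^{L}(\omega)-\Gamma^{L}(\upsilon)|\leq d_{H}(\Gamma(\omega),\Gamma(\upsilon))\leq M\|\omega-\upsilon\|,$$
and identically $|\Gamma^{U}(\omega)-\Gamma^{U}(\upsilon)|\leq M\|\omega-\upsilon\|$ on the same neighborhood.

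Finally, since $\omega_{0}\in D$ was arbitrary, both real-valued functions $\Gamma^{L}$ and $\Gamma^{U}$ satisfy the local Lipschitz estimate at every point of $D$, which is precisely the claim. I do not anticipate a genuine obstacle here: the statement is a direct consequence of the max-structure of the chosen Hausdorff metric, and the only point worth flagging is that no new constant or shrinking of the neighborhood is needed, since the bound on the maximum transfers verbatim to each coordinate function. (One could remark that the converse also holds by the same inequalities, so local Lipschitzness of the interval-valued function is in fact equivalent to that of its two endpoint functions, but this is not required for the stated proposition.)
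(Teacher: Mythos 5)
Your proof is correct: since $d_{H}(\Gamma(\omega),\Gamma(\upsilon))$ is by definition the maximum of $|\Gamma^{L}(\omega)-\Gamma^{L}(\upsilon)|$ and $|\Gamma^{U}(\omega)-\Gamma^{U}(\upsilon)|$, the local bound $d_{H}(\Gamma(\omega),\Gamma(\upsilon))\leq M\|\omega-\upsilon\|$ transfers verbatim, with the same $M$ and $\delta$, to each endpoint function, exactly as you argue. Note that the paper itself gives no proof at all --- the proposition is imported from Zhang et al.\ \cite{ZHA} --- so there is nothing to diverge from; your argument is the standard (indeed essentially the only) one, and your closing remark that the converse holds by the same inequalities is also accurate.
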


\begin{definition} 
A function $\Gamma:D\rightarrow \Re$ is said to be $\partial^{*}_{*}$-LU-convex on $D$ if the real valued functions $\Gamma^{L}$ and $\Gamma^{U}$ are $\partial^{*}_{*}$-convex on $D$.
\end{definition}

Suppose	$\Gamma=(\Gamma_{1},\Gamma_{2},...,\Gamma_{p}):D\rightarrow \Re^{p}$ be an interval-valued vector function. Every component function
$\Gamma_{k}=[\Gamma^{L}_{k},\Gamma^{U}_{k}],~k\in\ell=\{1,2,...,p\}$ is a locally Lipschitz interval-valued function defined on $D$. The nonsmooth interval-valued vector optimization problem (in short, $(IVOP)$) is defined as:
$$\mbox{Min}~\left\{\Gamma(\omega)=(\Gamma_{1}(\omega),\Gamma_{2}(\omega),...,\Gamma_{p}(\omega))\right\}~\mbox{such that}~\omega \in D.$$
	
\begin{definition} \cite{ZHA}
A vector $\upsilon\in D$ is said to be :
\begin{itemize}
\item[{(i)}] a LU-efficient solution of the $(IVOP)$ if there exists no $\omega\in D$ such that $\Gamma(\omega)\prec_{LU} \Gamma(\upsilon)$\\
or equivalently 
$$\Gamma_{i}(\omega)\preceq_{LU} \Gamma_{i}(\upsilon),~\forall~i\in\ell,~i\neq j.$$
$$\Gamma_{j}(\omega)\prec_{LU} \Gamma_{j}(\upsilon),~\mbox{for some}~j\in\ell.$$

\item[{(ii)}] a weakly LU-efficient solution of the $(IVOP)$ if there exists no $\omega\in D$ such that 
$$\Gamma_{i}(\mu)\prec_{LU} \Gamma_{i}(\upsilon),~\forall~i\in\ell.$$

\end{itemize}	
\end{definition}
	
\begin{theorem} \cite{LAH} \label{T2}
Suppose $\Gamma=(\Gamma_{1},\Gamma_{2},...,\Gamma_{p}):D\rightarrow \mathbb{R}^{p}$ be a valued vector function such that $\Gamma_{k}:D\rightarrow \Re$ are locally Lipschitz functions at $\upsilon\in D$ and admit bounded convexificators $\partial^{*}_{*}\Gamma(\upsilon),~\forall~k\in\ell$. Then $\Gamma$ is $\partial^{*}_{*}$-convex (strictly) on $D$ iff $\partial^{*}_{*}\Gamma$ is monotone (strictly) on $D$.
\end{theorem}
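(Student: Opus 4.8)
The plan is to reduce the vector statement to a scalar one and then combine the convexificator mean value theorem (Theorem \ref{T1}) with an antisymmetrization argument. Since $\Gamma=(\Gamma_{1},\dots,\Gamma_{p})$ and both $\partial^{*}_{*}$-convexity and monotonicity of $\partial^{*}_{*}\Gamma$ are imposed componentwise --- a vector $\zeta\in\partial^{*}_{*}\Gamma(\upsilon)$ being of the form $\zeta=(\zeta_{1},\dots,\zeta_{p})$ with $\zeta_{k}\in\partial^{*}_{*}\Gamma_{k}(\upsilon)$, and $\left\langle\zeta,\upsilon-\omega\right\rangle_{p}=(\left\langle\zeta_{1},\upsilon-\omega\right\rangle,\dots,\left\langle\zeta_{p},\upsilon-\omega\right\rangle)$ --- the relation $\geqq$ (and its strict version) holds for $\Gamma$ if and only if it holds coordinatewise for each $\Gamma_{k}$ in $\mathbb{R}$. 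Hence it suffices to prove the scalar fact: a locally Lipschitz $f:D\to\mathbb{R}$ admitting a bounded convexificator at every point is $\partial^{*}_{*}$-convex if and only if $\partial^{*}_{*}f$ is monotone, by which I mean $\left\langle\zeta_{x}-\zeta_{y},x-y\right\rangle\geq 0$ for all $x,y\in D$, $\zeta_{x}\in\partial^{*}_{*}f(x)$, $\zeta_{y}\in\partial^{*}_{*}f(y)$.

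For the necessity direction ($f$ convex $\Rightarrow$ $\partial^{*}_{*}f$ monotone), I would fix $x,y\in D$ with selections $\zeta_{x}\in\partial^{*}_{*}f(x)$ and $\zeta_{y}\in\partial^{*}_{*}f(y)$, write the convexity inequality at $x$ tested against $y$ and at $y$ tested against $x$, namely
$$f(y)-f(x)\geq\left\langle\zeta_{x},y-x\right\rangle,\qquad f(x)-f(y)\geq\left\langle\zeta_{y},x-y\right\rangle,$$
and add them; the left-hand sides cancel and rearranging yields $\left\langle\zeta_{x}-\zeta_{y},x-y\right\rangle\geq 0$. In the strict case the two convexity inequalities are strict whenever $x\neq y$, so the sum is strict and strict monotonicity follows.

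For the sufficiency direction ($\partial^{*}_{*}f$ monotone $\Rightarrow$ $f$ convex), this is where Theorem \ref{T1} does the work. Fix $x,y\in D$ and $\zeta_{x}\in\partial^{*}_{*}f(x)$. By local Lipschitzness $f$ is continuous on the segment $[x,y]$ and has a bounded convexificator there, so the mean value theorem (Theorem \ref{T1}) furnishes a point $c=x+t(y-x)$ with $t\in(0,1)$ and a $\zeta_{c}\in\partial^{*}_{*}f(c)$ such that $f(y)-f(x)=\left\langle\zeta_{c},y-x\right\rangle$. Applying monotonicity to the pair $(c,x)$ gives $\left\langle\zeta_{c}-\zeta_{x},c-x\right\rangle\geq 0$; since $c-x=t(y-x)$ with $t>0$, this divides down to $\left\langle\zeta_{c}-\zeta_{x},y-x\right\rangle\geq 0$, i.e. $\left\langle\zeta_{c},y-x\right\rangle\geq\left\langle\zeta_{x},y-x\right\rangle$. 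Combining with the mean value identity gives $f(y)-f(x)\geq\left\langle\zeta_{x},y-x\right\rangle$, which is precisely $\partial^{*}_{*}$-convexity. For the strict conclusion one invokes strict monotonicity: when $x\neq y$ we have $c\neq x$, so the monotonicity inequality is strict and the same chain yields $f(y)-f(x)>\left\langle\zeta_{x},y-x\right\rangle$.

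The main obstacle I anticipate is the sufficiency direction, and specifically the correct invocation of Theorem \ref{T1}: one must check that its hypotheses (continuity on $(x,y)$ together with a bounded convexificator on $[x,y]$) are actually in force along the whole segment and not merely at the endpoints, which tacitly requires $[x,y]\subseteq D$ and hence some convexity of the domain. One must also track that the mean value point $c$ lies strictly between $x$ and $y$ so that the scaling factor $t$ is strictly positive --- this positivity is exactly what allows monotonicity to be transferred from the direction $c-x$ to the direction $y-x$, and in the strict case it is what guarantees $c\neq x$. A secondary point to verify is that the componentwise reduction is legitimate: the mean value point $c$ produced for each coordinate $\Gamma_{k}$ may differ across $k$, but this causes no difficulty because the convexity inequality is demanded separately in each coordinate, so no simultaneous mean value point is needed.
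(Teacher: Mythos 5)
The paper gives no proof of Theorem \ref{T2} to compare against: it is quoted from Laha and Mishra \cite{LAH} and used as a black box, so your proof can only be judged on its own merits and against the reference's argument. On that basis it is correct, and it is essentially the standard proof: the componentwise reduction is legitimate (both $\partial^{*}_{*}$-convexity and monotonicity of $\partial^{*}_{*}\Gamma$ are coordinatewise conditions, and, as you note, no simultaneous mean value point across coordinates is needed), antisymmetrization gives necessity, and the mean value theorem plus monotonicity gives sufficiency. Two points deserve to be made explicit. First, the mean value theorem for convexificators as actually proved by Jeyakumar and Luc (and as used later in this paper, where the authors write $co\,\partial^{*}_{*}\Gamma_{i}(\mu(\bar{t_{i}}))$) places the intermediate generalized gradient in the \emph{convex hull} of $\partial^{*}_{*}f(c)$, not in $\partial^{*}_{*}f(c)$ itself; the statement of Theorem \ref{T1} in the paper suppresses the hull, and your proof inherits that suppression. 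Your sufficiency step survives: if $\zeta_{c}=\sum_{j}\lambda_{j}\zeta_{j}$ with $\zeta_{j}\in\partial^{*}_{*}f(c)$, $\lambda_{j}\geq 0$, $\sum_{j}\lambda_{j}=1$, then $\left\langle \zeta_{c}-\zeta_{x},c-x\right\rangle=\sum_{j}\lambda_{j}\left\langle \zeta_{j}-\zeta_{x},c-x\right\rangle\geq 0$ by monotonicity applied to each $\zeta_{j}$ (with strict inequality in the strict case), but this one-line extension of monotonicity to convex hulls should be stated rather than left implicit. Second, you correctly flag that invoking Theorem \ref{T1} along the segment tacitly requires $[x,y]\subseteq D$; this is harmless here because the paper assumes $D$ convex throughout (its other proofs begin ``Using convexity of $D$''), though the hypothesis is missing from the statement as printed --- as is the restriction $\upsilon\neq\omega$ in the paper's definition of strict $\partial^{*}_{*}$-convexity, without which the strict assertions would be vacuous at $\upsilon=\omega$; your proof implicitly and correctly reads the definition with that restriction.
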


\section{Interval Valued Minty and Stampacchia Vector Variational Inequalities in terms of convexificators}

An interval-valued vector variational-like inequality problem of Minty type in terms of convexificators (for short, $(IMVVLIP)$) for a nonsmooth case, is to find $\omega\in D$ such that the following cannot hold
$$\left\{\begin{array}{cc}
         \left\langle \zeta^{L},\upsilon-\omega \right\rangle_{p}=\left( \left\langle \zeta^{L}_{1},\upsilon-\omega \right\rangle,\left\langle \zeta^{L}_{2},\upsilon-\omega \right\rangle,...,\left\langle \zeta^{L}_{p},\upsilon-\omega \right\rangle \right)\leqq 0, \\
				  \left\langle \zeta^{U},\upsilon-\omega \right\rangle_{p}=\left( \left\langle \zeta^{U}_{1},\upsilon-\omega \right\rangle,\left\langle \zeta^{U}_{2},\upsilon-\omega \right\rangle,...,\left\langle \zeta^{U}_{p},\upsilon-\omega \right\rangle \right)\leqq 0,			
					\end{array}\right.$$
for all $\upsilon \in D$ and all $\zeta^{L}_{i}\in \partial^{*}_{*}\Gamma_{i}(\omega),~\zeta^{U}_{i}\in \partial^{*}_{*}\Gamma_{i}(\omega),~i\in \ell ={1,2,...,p}$.\\	

An interval-valued vector variational-like inequality problem of Stampacchia type in terms of convexificators (for short, $(ISVVLIP)$) for a nonsmooth case, is to find $\omega\in D$ such that the following cannot hold
$$\left\{\begin{array}{cc}
         \left\langle \xi^{L},\upsilon-\omega \right\rangle_{p}=\left( \left\langle \xi^{L}_{1},\upsilon-\omega \right\rangle,\left\langle \xi^{L}_{2},\upsilon-\omega \right\rangle,...,\left\langle \xi^{L}_{p},\upsilon-\omega \right\rangle \right)\leqq 0, \\
				  \left\langle \xi^{U},\upsilon-\omega \right\rangle_{p}=\left( \left\langle \xi^{U}_{1},\upsilon-\omega \right\rangle,\left\langle \xi^{U}_{2},\upsilon-\omega \right\rangle,...,\left\langle \xi^{U}_{p},\upsilon-\omega \right\rangle \right)\leqq 0,			
					\end{array}\right.$$
for all $\upsilon \in D$ and all $\xi^{L}_{i}\in \partial^{*}_{*}\Gamma_{i}(\upsilon),~\xi^{U}_{i}\in \partial^{*}_{*}\Gamma_{i}(\upsilon),~i\in \ell ={1,2,...,p}$.\\	

We propose essential conditions, which are both necessary and sufficient for an effective solution to the $(IVOP)$.

\begin{theorem}
Suppose $\Gamma=(\Gamma_{1},\Gamma_{2},...,\Gamma_{p}):D\rightarrow \Re^{p}$ be a interval-valued vector function such that $\Gamma_{k}:D\rightarrow \Re$ are locally Lipschitz functions on $D$ and admit bounded convexificators $\partial^{*}_{*}\Gamma(\upsilon)$ for any $\upsilon\in D,~\forall~k\in\ell$. Also, suppose that $\Gamma$ is $\partial^{*}_{*}$-LU-convex on $D$. Then $\omega\in D$ is an LU-efficient solution of $(IVOP)$ iff $\upsilon$ is a solution of $(IMVVLIP)$.
\end{theorem}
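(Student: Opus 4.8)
The plan is to establish the two implications separately after reducing the interval problem to its $2p$ real coordinate functions $\Gamma_{1}^{L},\dots,\Gamma_{p}^{L},\Gamma_{1}^{U},\dots,\Gamma_{p}^{U}$. The hypothesis that $\Gamma$ is $\partial^{*}_{*}$-LU-convex means exactly that each $\Gamma_{i}^{L}$ and each $\Gamma_{i}^{U}$ is $\partial^{*}_{*}$-convex on $D$, so that for all $\upsilon,\omega\in D$,
$$\Gamma_{i}^{L}(\upsilon)-\Gamma_{i}^{L}(\omega)\ge\left\langle \zeta_{i}^{L},\upsilon-\omega \right\rangle,\qquad \Gamma_{i}^{U}(\upsilon)-\Gamma_{i}^{U}(\omega)\ge\left\langle \zeta_{i}^{U},\upsilon-\omega \right\rangle,$$
for every $\zeta_{i}^{L}\in\partial^{*}_{*}\Gamma_{i}^{L}(\omega)$ and $\zeta_{i}^{U}\in\partial^{*}_{*}\Gamma_{i}^{U}(\omega)$. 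I would also record that $\Gamma(\upsilon)\prec_{LU}\Gamma(\omega)$ unwinds to $\Gamma_{i}^{L}(\upsilon)\le\Gamma_{i}^{L}(\omega)$ and $\Gamma_{i}^{U}(\upsilon)\le\Gamma_{i}^{U}(\omega)$ for all $i$, with at least one of these $2p$ inequalities strict.

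The implication ``$\omega$ solves $(IMVVLIP)\Rightarrow\omega$ is LU-efficient'' is the routine half, which I would prove by contraposition. If $\omega$ is not LU-efficient, choose $\upsilon\in D$ with $\Gamma(\upsilon)\prec_{LU}\Gamma(\omega)$ and substitute the coordinate inequalities into the convexity estimates to get, for every admissible $\zeta_{i}^{L},\zeta_{i}^{U}$,
$$\left\langle \zeta_{i}^{L},\upsilon-\omega \right\rangle\le\Gamma_{i}^{L}(\upsilon)-\Gamma_{i}^{L}(\omega)\le 0,\qquad \left\langle \zeta_{i}^{U},\upsilon-\omega \right\rangle\le\Gamma_{i}^{U}(\upsilon)-\Gamma_{i}^{U}(\omega)\le 0.$$
Thus $\left\langle \zeta^{L},\upsilon-\omega \right\rangle_{p}\leqq 0$ and $\left\langle \zeta^{U},\upsilon-\omega \right\rangle_{p}\leqq 0$, with strict negativity in the coordinate where $\prec_{LU}$ is strict, so this $\upsilon$ realizes the system that $(IMVVLIP)$ forbids; hence $\omega$ does not solve $(IMVVLIP)$.

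The converse ``LU-efficient $\Rightarrow$ $(IMVVLIP)$'' is the hard half, and it cannot come from the convexity inequalities alone: evaluating a convexificator at $\omega$ only yields the lower bound $\Gamma_{i}^{L}(\upsilon)-\Gamma_{i}^{L}(\omega)\ge\left\langle \zeta_{i}^{L},\upsilon-\omega \right\rangle$, which points the wrong way for deducing a decrease of $\Gamma$ from a sign condition on the pairing. I would therefore route through the Minty-type problem $(ISVVLIP)$, whose convexificators sit at the running point. Applying the convexity estimate at $\upsilon$ instead of $\omega$ gives $\Gamma_{i}^{L}(\upsilon)-\Gamma_{i}^{L}(\omega)\le\left\langle \xi_{i}^{L},\upsilon-\omega \right\rangle$ for $\xi_{i}^{L}\in\partial^{*}_{*}\Gamma_{i}^{L}(\upsilon)$; so if the $(ISVVLIP)$ system held for some $\upsilon$ then $\Gamma(\upsilon)\prec_{LU}\Gamma(\omega)$ would follow, contradicting efficiency, which shows an LU-efficient $\omega$ solves $(ISVVLIP)$. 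To pass from $(ISVVLIP)$ to $(IMVVLIP)$ I would use the segment trick: for a fixed $\upsilon\in D$ put $\omega_{t}=\omega+t(\upsilon-\omega)\in D$ for $t\in(0,1]$ (assuming $D$ convex so the segment lies in $D$), apply $(ISVVLIP)$ with test point $\omega_{t}$, and note $\omega_{t}-\omega=t(\upsilon-\omega)$ preserves signs; then let $t\downarrow 0$, invoking the local Lipschitz continuity of the $\Gamma_{i}$, the boundedness of the convexificators, the mean value theorem (Theorem~\ref{T1}), and the convexity--monotonicity equivalence (Theorem~\ref{T2}) to relate $\partial^{*}_{*}\Gamma_{i}(\omega_{t})$ to $\partial^{*}_{*}\Gamma_{i}(\omega)$ in the limit and recover the $(IMVVLIP)$ sign condition at $\omega$.

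I expect this limiting passage --- transporting the pairing inequality from the interior points $\omega_{t}$ back to $\omega$ as $t\downarrow 0$ --- to be the crux, because it needs an upper semicontinuity (closed-graph) property of the set-valued map $\upsilon\mapsto\partial^{*}_{*}\Gamma_{i}(\upsilon)$ beyond plain boundedness, which is precisely what I anticipate extracting from Theorems~\ref{T1} and \ref{T2}. Throughout I would keep track of the single strict coordinate so that the $\prec_{LU}$ rather than merely $\preceq_{LU}$ structure, and hence LU-efficiency rather than weak LU-efficiency, is the version that is preserved.
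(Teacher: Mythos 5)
Your coordinate reduction and your two convexity computations are correct, but the converse as you route it contains a genuine gap, concentrated exactly where you flagged it: the limiting passage $t\downarrow 0$ cannot be completed. Theorem~\ref{T1} (mean value) and Theorem~\ref{T2} (convexity iff monotonicity) provide no upper semicontinuity or closed-graph property of the map $\upsilon\mapsto\partial^{*}_{*}\Gamma_{i}(\upsilon)$, and pointwise boundedness with closed values does not imply it. Worse, even granting a convergent selection $\xi_{t}\to\bar{\xi}\in\partial^{*}_{*}\Gamma_{i}(\omega)$, the limit delivers a \emph{single} element $\bar{\xi}$ satisfying a non-strict inequality, whereas the printed $(IMVVLIP)$ requires the sign system to fail for \emph{every} $\zeta_{i}\in\partial^{*}_{*}\Gamma_{i}(\omega)$ --- a universal quantifier you can never reach by extracting limits of particular selections. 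In fact the statement you are trying to prove in that half is false as printed: take $p=1$, $D=\mathbb{R}$, the degenerate interval function $\Gamma=[\,|x|,|x|\,]$ with convexificator $\{-1,1\}$ at $0$ and $\{\mathrm{sgn}(x)\}$ elsewhere (this is $\partial^{*}_{*}$-LU-convex); then $\omega=0$ is LU-efficient, yet $\upsilon=-1$ with $\zeta^{L}=\zeta^{U}=1$ realizes the forbidden system $\left\langle \zeta,\upsilon-\omega \right\rangle\leqq 0$. What this exposes is a typo in the paper's displayed definitions: in the paper's own proof the Minty problem is used with $\zeta^{L}_{i},\zeta^{U}_{i}\in\partial^{*}_{*}\Gamma_{i}(\upsilon)$, convexificators at the \emph{moving} point (the standard Minty convention), so the ``$\omega$'' in the displayed $(IMVVLIP)$ and the ``$\upsilon$'' in $(ISVVLIP)$ are swapped.

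Under the corrected reading the two halves trade places relative to your labeling. The direction you called hard --- efficient $\Rightarrow$ Minty --- is the easy one, and your own intermediate step already proves it: your computation ``convexity at $\upsilon$ gives $\Gamma_{i}(\upsilon)-\Gamma_{i}(\omega)\preceq_{LU}$ the pairing, so the sign system would force $\Gamma(\upsilon)\prec_{LU}\Gamma(\omega)$'' is essentially the paper's first part. The direction you called routine --- Minty solution $\Rightarrow$ efficient --- is the hard one, and the paper handles it with \emph{no limit at all}: assuming $\Gamma(\upsilon)\prec_{LU}\Gamma(\omega)$, it applies the mean value theorem (Theorem~\ref{T1}) on the segment $\mu(t)=\omega+t(\upsilon-\omega)$ to each of the $2p$ coordinate functions, producing $\bar{t_{i}}\in(0,t)$ and $\bar{\zeta^{L}_{i}},\bar{\zeta^{U}_{i}}\in co\,\partial^{*}_{*}\Gamma_{i}(\mu(\bar{t_{i}}))$ with $\left\langle \bar{\zeta_{i}},\upsilon-\omega \right\rangle\leq\Gamma_{i}(\upsilon)-\Gamma_{i}(\omega)$; it then uses monotonicity (Theorem~\ref{T2}) to transport all these elements to the single smallest parameter $\hat{t}=\min_{i}\bar{t_{i}}$, and finally scales by $\hat{t}$ to exhibit the test point $\mu(\hat{t})\in D$ and convexificator elements \emph{at} $\mu(\hat{t})$ violating the Minty system --- a finite common-intermediate-point construction in place of your semicontinuity argument. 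So: repair the definitional swap, keep both of your convexity computations (each is one of the paper's easy steps), and replace the $t\downarrow 0$ passage by the Theorem~\ref{T1} plus Theorem~\ref{T2} segment argument.
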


\begin{proof}
Suppose on the contrary that $\omega$ is not a solution of $(IMVVLIP)$, then there exist $\upsilon\in D$, $\zeta^{L}_{i}\in \partial^{*}_{*}\Gamma_{i}(\upsilon),~\zeta^{U}_{i}\in \partial^{*}_{*}\Gamma_{i}(\upsilon),~i\in\ell$ such that

\begin{eqnarray} \label{E11}
\left\{\begin{array}{cc}
         \left\langle \zeta^{L},\upsilon-\omega \right\rangle_{p}=\left( \left\langle \zeta^{L}_{1},\upsilon-\omega \right\rangle,\left\langle \zeta^{L}_{2},\upsilon-\omega \right\rangle,...,\left\langle \zeta^{L}_{p},\upsilon-\omega \right\rangle \right) \leqq 0, \\
				  \left\langle \zeta^{U},\upsilon-\omega \right\rangle_{p}=\left( \left\langle \zeta^{U}_{1},\upsilon-\omega \right\rangle,\left\langle \zeta^{U}_{2},\upsilon-\omega \right\rangle,...,\left\langle \zeta^{U}_{p},\upsilon-\omega \right\rangle \right) \leqq 0.			
					\end{array}\right.
\end{eqnarray}

Since each $\Gamma_{i}$ is $\partial^{*}_{*}$-LU-convex. Therefore $\Gamma_{i}^{L}$ and $\Gamma_{i}^{U}$ are $\partial^{*}_{*}$-convex, so we have 
\begin{eqnarray}\label{E12}
\left\{\begin{array}{cc}
          \Gamma_{i}^{L}(\upsilon)-\Gamma_{i}^{L}(\omega) \geqq \left\langle \zeta^{L}_{i},\upsilon-\omega \right\rangle_{p},\\
				  \Gamma_{i}^{U}(\upsilon)-\Gamma_{i}^{U}(\omega) \geqq	\left\langle \zeta^{U}_{i},\upsilon-\omega \right\rangle_{p},		
					\end{array}\right.
\end{eqnarray}					
for all $\omega\in D$ and $i\in \ell$.	
From (\ref{E11}) and (\ref{E12}), it follows that 
$$\Gamma(\upsilon)\prec_{LU} \Gamma(\omega),$$
which is a contradiction.
			
Conversely, suppose that $\omega\in D$ is a solution of $(IMVVLIP)$ but not an efficient soltuion of $(IVOP)$. Then there exists $\upsilon\in D$ such that
\begin{eqnarray}\label{E1}
\Gamma(\upsilon)\prec_{LU} \Gamma(\omega).
\end{eqnarray}	
Using convexity of $D$, take $\mu(t)=\omega+t(\upsilon-\omega)\in D$ for any $t\in[0,1]$. As $\Gamma$ is $\partial^{*}_{*}$-LU-convex on $D$, by Proposition (\ref{T2}), $\forall~t\in[0,1]$, we have
$$\left\{\begin{array}{cc}
         	\Gamma^{L}(\omega+t(\upsilon-\omega))-\Gamma^{L}(\omega)\leqq t [\Gamma^{L}(\upsilon)-\Gamma^{L}(\omega)],\\
					\Gamma^{U}(\omega+t(\upsilon-\omega))-\Gamma^{U}(\omega)\leqq t [\Gamma^{U}(\upsilon)-\Gamma^{U}(\omega)],
					\end{array}\right.$$
or equivalently, for every $i\in \ell$ and $t\in[0,1]$, we have
$$\left\{\begin{array}{cc}
         	\Gamma^{L}_{i}(\omega+t(\upsilon-\omega))-\Gamma^{L}_{i}(\omega)\leq t [\Gamma^{L}_{i}(\upsilon)-\Gamma^{L}_{i}(\omega)],\\
					\Gamma^{U}_{i}(\omega+t(\upsilon-\omega))-\Gamma^{U}_{i}(\omega)\leq t [\Gamma^{U}_{i}(\upsilon)-\Gamma^{U}_{i}(\omega)].
					\end{array}\right.$$
					
By Mean value Theorem (\ref{T1}) on convexificators, for any $i\in\ell$, there exists $\bar{t_{i}}\in(0,t)$ and $\bar{\zeta^{L}_{i}}\in co\partial^{*}_{*}\Gamma_{i}(\mu(\bar{t_{i}}))$, $\bar{\zeta^{U}_{i}}\in co\partial^{*}_{*}\Gamma_{i}(\mu(\bar{t_{i}}))$ such that

$$\left\{\begin{array}{cc}
       \left\langle \bar{\zeta^{L}_{i}},t(\upsilon-\omega) \right\rangle = \Gamma^{L}_{i}(\omega+t(\upsilon-\omega))-\Gamma^{L}_{i}(\omega) ,\\
			 \left\langle \bar{\zeta^{U}_{i}},t(\upsilon-\omega) \right\rangle = \Gamma^{U}_{i}(\omega+t(\upsilon-\omega))-\Gamma^{U}_{i}(\omega),
					\end{array}\right.$$

which implies that for any $i\in \ell$ and for some $\bar{\zeta^{L}_{i}}\in co\partial^{*}_{*}\Gamma_{i}(\mu(\bar{t_{i}}))$, $\bar{\zeta^{U}_{i}}\in co\partial^{*}_{*}\Gamma_{i}(\mu(\bar{t_{i}}))$, we have 

\begin{eqnarray}\label{E2}
\left\{\begin{array}{cc}
       \left\langle \bar{\zeta^{L}_{i}},\upsilon-\omega \right\rangle \leq \Gamma^{L}_{i}(\upsilon)-\Gamma^{L}_{i}(\omega),\\
			 \left\langle \bar{\zeta^{U}_{i}},\upsilon-\omega \right\rangle \leq \Gamma^{U}_{i}(\upsilon)-\Gamma^{U}_{i}(\omega).
					\end{array}\right.
\end{eqnarray} 
 
Suppose $\bar{t_{1}}=\bar{t_{2}}=...=\bar{t_{p}}=\bar{t}$. Multiplying both side of (\ref{E2}) by $\bar{t}$, for $i\in \ell$ and $\bar{\zeta^{L}_{i}}\in co\partial^{*}_{*}\Gamma_{i}(\mu(\bar{t_{i}}))$, $\bar{\zeta^{U}_{i}}\in co\partial^{*}_{*}\Gamma_{i}(\mu(\bar{t_{i}}))$, we have

\begin{eqnarray}\label{E3}
\left\{\begin{array}{cc}
       \left\langle \bar{\zeta^{L}_{i}},\mu(\bar{t})-\omega \right\rangle \leq \bar{t}(\Gamma^{L}_{i}(\upsilon)-\Gamma^{L}_{i}(\omega)),\\
			 \left\langle \bar{\zeta^{U}_{i}},\mu(\bar{t})-\omega \right\rangle \leq \bar{t}(\Gamma^{U}_{i}(\upsilon)-\Gamma^{U}_{i}(\omega)).
					\end{array}\right.
\end{eqnarray} 		
	
Combining	(\ref{E1}) and (\ref{E3}), it follows that $\upsilon$ is not a solution of $(IMVVLIP)$, which is a contradiction.\\
Consider the case when $\bar{t_{1}},\bar{t_{2}},...,\bar{t_{p}}$ are not all equal. Suppose $\bar{t_{1}}\neq \bar{t_{2}}$. Then from (\ref{E2}), we have 

$$\left\{\begin{array}{cc}
       \left\langle \bar{\zeta^{L}_{1}},\upsilon-\omega \right\rangle \leq \Gamma^{L}_{1}(\upsilon)-\Gamma^{L}_{1}(\omega),\\
			 \left\langle \bar{\zeta^{U}_{1}},\upsilon-\omega \right\rangle \leq \Gamma^{U}_{1}(\upsilon)-\Gamma^{U}_{1}(\omega),
					\end{array}\right.$$
for some $\bar{\zeta^{L}_{1}}\in co\partial^{*}_{*}\Gamma_{1}(\mu(\bar{t_{1}}))$, $\bar{\zeta^{U}_{1}}\in co\partial^{*}_{*}\Gamma_{1}(\mu(\bar{t_{1}}))$						
and 
$$\left\{\begin{array}{cc}
       \left\langle \bar{\zeta^{L}_{2}},\upsilon-\omega \right\rangle \leq \Gamma^{L}_{2}(\upsilon)-\Gamma^{L}_{2}(\omega),\\
			 \left\langle \bar{\zeta^{U}_{2}},\upsilon-\omega \right\rangle \leq \Gamma^{U}_{2}(\upsilon)-\Gamma^{U}_{2}(\omega),
					\end{array}\right.$$
					
for some $\bar{\zeta^{L}_{2}}\in co\partial^{*}_{*}\Gamma_{2}(\mu(\bar{t_{2}}))$, $\bar{\zeta^{U}_{2}}\in co\partial^{*}_{*}\Gamma_{2}(\mu(\bar{t_{2}}))$.\\					
Since $\Gamma_{1}$ and $\Gamma_{2}$ are $\partial^{*}_{*}$-LU-convex. Therefore $\Gamma_{1}^{L}$ and $\Gamma_{2}^{U}$ are $\partial^{*}_{*}$-convex, so by Proposition (\ref{T2}),  we have 

$$\left\{\begin{array}{cc}
       \left\langle \bar{\zeta^{L}_{1}}-\bar{\zeta^{L}_{12}},\mu(\bar{t_{1}})-\mu(\bar{t_{2}}) \right\rangle \geq 0,~\forall~\bar{\zeta^{L}_{12}}\in co\partial^{*}_{*}\Gamma_{1}(\mu(\bar{t_{2}})),\\
			 \left\langle \bar{\zeta^{U}_{1}}-\bar{\zeta^{U}_{12}},\mu(\bar{t_{1}})-\mu(\bar{t_{2}}) \right\rangle \geq 0,~\forall~\bar{\zeta^{U}_{12}}\in co\partial^{*}_{*}\Gamma_{1}(\mu(\bar{t_{2}})),
					\end{array}\right.$$
and 

$$\left\{\begin{array}{cc}
       \left\langle \bar{\zeta^{L}_{2}}-\bar{\zeta^{L}_{21}},\mu(\bar{t_{2}})-\mu(\bar{t_{1}}) \right\rangle \geq 0,~\forall~\bar{\zeta^{L}_{21}}\in co\partial^{*}_{*}\Gamma_{2}(\mu(\bar{t_{1}})),\\
			 \left\langle \bar{\zeta^{U}_{2}}-\bar{\zeta^{U}_{21}},\mu(\bar{t_{2}})-\mu(\bar{t_{1}}) \right\rangle \geq 0,~\forall~\bar{\zeta^{U}_{21}}\in co\partial^{*}_{*}\Gamma_{2}(\mu(\bar{t_{1}})).
					\end{array}\right.$$

If $\bar{t_{1}}-\bar{t_{2}}>0$, then
\begin{eqnarray*}
 \left\{\begin{array}{cc}
       \left\langle \bar{\zeta^{L}_{12}},\upsilon-\omega \right\rangle \leq \Gamma^{L}_{1}(\upsilon)-\Gamma^{L}_{1}(\omega),\\
			 \left\langle \bar{\zeta^{U}_{12}},\upsilon-\omega \right\rangle \leq \Gamma^{U}_{1}(\upsilon)-\Gamma^{U}_{1}(\omega).
					\end{array}\right.
\end{eqnarray*}

If $\bar{t_{2}}-\bar{t_{1}}>0$, then	
\begin{eqnarray*}
\left\{\begin{array}{cc}
       \left\langle \bar{\zeta^{L}_{21}},\upsilon-\omega \right\rangle \leq \Gamma^{L}_{2}(\upsilon)-\Gamma^{L}_{2}(\omega),\\
			 \left\langle \bar{\zeta^{U}_{21}},\upsilon-\omega \right\rangle \leq \Gamma^{U}_{2}(\upsilon)-\Gamma^{U}_{2}(\omega).
					\end{array}\right.
\end{eqnarray*}
	
For $\bar{t_{1}}\neq \bar{t_{2}}$, set $\bar{t}=min\{\bar{t_{1}},\bar{t_{2}}\}$, there exists $\bar{\zeta^{L}_{i}}\in co\partial^{*}_{*}\Gamma_{i}^{L}(\mu(\bar{t}))$, $\bar{\zeta^{L}_{i}}\in co\partial^{*}_{*}\Gamma_{i}^{U}(\mu(\bar{t}))$, for any $i=1,2$ such that
\begin{eqnarray*}
\left\{\begin{array}{cc}
       \left\langle \bar{\zeta^{L}_{i}},\upsilon-\omega \right\rangle \leq \Gamma^{L}_{i}(\upsilon)-\Gamma^{L}_{i}(\omega),\\
			 \left\langle \bar{\zeta^{U}_{i}},\upsilon-\omega \right\rangle \leq \Gamma^{U}_{i}(\upsilon)-\Gamma^{U}_{i}(\omega).
					\end{array}\right.
\end{eqnarray*}

Continuation this process, we can find $\hat{t}\in (0,t)$ such that $\hat{t}=min\{\bar{t_{1}},\bar{t_{2}},...,\bar{t_{p}}\}$ and $\hat{\zeta^{L}_{i}}\in co\partial^{*}_{*}\Gamma_{i}^{L}(\mu(\hat{t}))$, $\hat{\zeta^{U}_{i}}\in co\partial^{*}_{*}\Gamma_{i}^{U}(\mu(\hat{t})),~\forall~i\in \ell$ such that

\begin{eqnarray*}
\left\{\begin{array}{cc}
       \left\langle \hat{\zeta^{L}_{i}},\upsilon-\omega \right\rangle \leq \Gamma^{L}_{i}(\upsilon)-\Gamma^{L}_{i}(\omega),\\
			 \left\langle \hat{\zeta^{U}_{i}},\upsilon-\omega \right\rangle \leq \Gamma^{U}_{i}(\upsilon)-\Gamma^{U}_{i}(\omega).
					\end{array}\right.
\end{eqnarray*}
Multiplying the above inequalities by $\hat{t}$, we have

\begin{eqnarray*}
\left\{\begin{array}{cc}
       \left\langle \hat{\zeta^{L}_{i}},\mu(\hat{t})-\omega \right\rangle \leq \hat{t}(\Gamma^{L}_{i}(\upsilon)-\Gamma^{L}_{i}(\omega)),\\
			 \left\langle \hat{\zeta^{U}_{i}},\mu(\hat{t})-\omega \right\rangle \leq \hat{t}(\Gamma^{U}_{i}(\upsilon)-\Gamma^{U}_{i}(\omega)).
					\end{array}\right.
\end{eqnarray*}

From (\ref{E1}), for some $\mu(\hat{t})\in D$, we have
\begin{eqnarray*}
\left\{\begin{array}{cc}
       \left\langle \hat{\zeta^{L}},\mu(\hat{t})-\omega \right\rangle_{p} \leqq 0,\\
			 \left\langle \hat{\zeta^{U}},\mu(\hat{t})-\omega \right\rangle_{p} \leqq 0,
					\end{array}\right.
\end{eqnarray*}
which is a contradiction. This complete the proof.													
\end{proof}

\begin{theorem}
Suppose $\Gamma=(\Gamma_{1},\Gamma_{2},...,\Gamma_{p}):D\rightarrow \Re^{p}$ be a interval-valued vector function such that $\Gamma_{k}:D\rightarrow \Re$ are locally Lipschitz functions on $D$ and admit bounded convexificators $\partial^{*}_{*}\Gamma(\upsilon)$ for any $\upsilon\in D,~\forall~k\in\ell$. Also, suppose that $\Gamma$ is $\partial^{*}_{*}$-LU-convex on $D$. If $\omega\in D$ is a solution of $(ISVVLIP)$, then $\omega$ is a solution of $(IMVVLIP)$.
\end{theorem}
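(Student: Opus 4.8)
The plan is to argue by contradiction, converting the Minty-type inequalities of $(IMVVLIP)$ (whose convexificator elements are taken at the running point $\upsilon$) into Stampacchia-type inequalities of $(ISVVLIP)$ (whose convexificator elements are taken at the candidate point $\omega$) by exploiting the monotonicity of the convexificator. The structural fact that drives everything is that, since $\Gamma$ is $\partial^{*}_{*}$-LU-convex on $D$, each scalar function $\Gamma_{i}^{L}$ and $\Gamma_{i}^{U}$ is $\partial^{*}_{*}$-convex, so by Theorem \ref{T2} the maps $\partial^{*}_{*}\Gamma_{i}^{L}$ and $\partial^{*}_{*}\Gamma_{i}^{U}$ are monotone on $D$. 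Monotonicity is precisely the device that lets one compare an inner product evaluated at $\upsilon$ with the corresponding inner product evaluated at $\omega$, which is exactly the gap between the two inequality problems.

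First I would suppose, for contradiction, that $\omega$ solves $(ISVVLIP)$ but fails to solve $(IMVVLIP)$. Negating the Minty condition produces a point $\upsilon\in D$ together with elements $\zeta_{i}^{L}\in\partial^{*}_{*}\Gamma_{i}(\upsilon)$ and $\zeta_{i}^{U}\in\partial^{*}_{*}\Gamma_{i}(\upsilon)$, $i\in\ell$, for which
$$\left\langle \zeta^{L},\upsilon-\omega\right\rangle_{p}\leqq 0 \quad\text{and}\quad \left\langle \zeta^{U},\upsilon-\omega\right\rangle_{p}\leqq 0.$$
I would then fix an arbitrary pair $\xi_{i}^{L}\in\partial^{*}_{*}\Gamma_{i}(\omega)$, $\xi_{i}^{U}\in\partial^{*}_{*}\Gamma_{i}(\omega)$ and apply the scalar monotonicity componentwise: for each $i\in\ell$, monotonicity of $\partial^{*}_{*}\Gamma_{i}^{L}$ gives $\left\langle \zeta_{i}^{L}-\xi_{i}^{L},\upsilon-\omega\right\rangle\geq 0$, whence $\left\langle \xi_{i}^{L},\upsilon-\omega\right\rangle\leq\left\langle \zeta_{i}^{L},\upsilon-\omega\right\rangle\leq 0$, and identically $\left\langle \xi_{i}^{U},\upsilon-\omega\right\rangle\leq\left\langle \zeta_{i}^{U},\upsilon-\omega\right\rangle\leq 0$.

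Collecting these scalar estimates over $i\in\ell$ yields $\left\langle \xi^{L},\upsilon-\omega\right\rangle_{p}\leqq 0$ and $\left\langle \xi^{U},\upsilon-\omega\right\rangle_{p}\leqq 0$ at this single $\upsilon$ and for these (in fact arbitrary) convexificator elements at $\omega$; this is exactly the system that $(ISVVLIP)$ forbids, so we contradict the hypothesis that $\omega$ solves $(ISVVLIP)$, completing the proof. I expect the only delicate point to be the direction bookkeeping: one must invoke monotonicity in the form that bounds the quantity evaluated at $\omega$ \emph{above} by the quantity evaluated at $\upsilon$, so that the sign condition $\leqq 0$ is inherited rather than reversed, and one must treat the lower endpoint functions $\Gamma_{i}^{L}$ and the upper endpoint functions $\Gamma_{i}^{U}$ symmetrically while keeping the order relation $\leqq$ componentwise throughout.
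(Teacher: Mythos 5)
Your proof is correct and is essentially the paper's own argument: both consist of a single monotonicity step, invoking Theorem \ref{T2} to convert $\partial^{*}_{*}$-LU-convexity of $\Gamma$ into monotonicity of $\partial^{*}_{*}\Gamma_{i}^{L}$ and $\partial^{*}_{*}\Gamma_{i}^{U}$, and then transferring the componentwise system $\leqq 0$ from convexificator elements at one point to convexificator elements at the other. One point is worth recording, because it is precisely the ``direction bookkeeping'' you flag as the delicate step: you have resolved it in the only way that works, and in doing so you have quietly corrected the paper. As displayed, the paper's definitions attach $\zeta_{i}^{L},\zeta_{i}^{U}\in\partial^{*}_{*}\Gamma_{i}(\omega)$ to $(IMVVLIP)$ and $\xi_{i}^{L},\xi_{i}^{U}\in\partial^{*}_{*}\Gamma_{i}(\upsilon)$ to $(ISVVLIP)$ --- the reverse of the classical Minty/Stampacchia convention you adopt. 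Under those literal attachments, monotonicity gives $\langle \xi_{i}-\zeta_{i},\upsilon-\omega\rangle\geq 0$, i.e.\ it bounds $\langle \zeta_{i},\upsilon-\omega\rangle$ \emph{above} by $\langle \xi_{i},\upsilon-\omega\rangle$, which is the unsupported direction for the implication $(ISVVLIP)\Rightarrow(IMVVLIP)$; read literally, the paper's one-line proof transfers the inequality the wrong way (monotonicity would instead yield the converse implication). Your convention --- Minty elements taken at the running point $\upsilon$, Stampacchia elements at the candidate $\omega$ --- is the one the paper itself uses when it negates $(IMVVLIP)$ inside the proof of the first theorem of Section 4, and under it your componentwise chain $\langle \xi_{i},\upsilon-\omega\rangle\leq\langle \zeta_{i},\upsilon-\omega\rangle\leq 0$ produces the forbidden Stampacchia system (indeed for \emph{every} choice of elements at $\omega$, which is stronger than the existence needed) and the desired contradiction, so your argument is complete.
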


\begin{proof}
Suppose $\omega\in D$ is a solution of $(ISVVLIP)$, then for any $\upsilon\in D$, $\xi^{L}\in \partial^{*}_{*}\Gamma(\upsilon),~\xi^{U}\in \partial^{*}_{*}\Gamma(\upsilon)$ the following cannot hold
\begin{eqnarray*} 
\left\{\begin{array}{cc}
         \left\langle \xi^{L},\upsilon-\omega \right\rangle_{p} \leqq 0, \\
				 \left\langle \xi^{U},\upsilon-\omega \right\rangle_{p} \leqq 0.			
					\end{array}\right.
\end{eqnarray*}

Since $\Gamma$ is $\partial^{*}_{*}$-LU-convex. Therefore $\Gamma^{L}$ and $\Gamma^{U}$ are $\partial^{*}_{*}$-convex, so by Theorem (\ref{T2}), $\partial^{*}_{*}\Gamma^{L}$ and $\partial^{*}_{*}\Gamma^{U}$ are monotone over D, which implies that, for any $\upsilon\in D$ and $\zeta^{L}\in \partial^{*}_{*}\Gamma(\omega),~\zeta^{U}\in \partial^{*}_{*}\Gamma(\omega)$ the following cannot hold

\begin{eqnarray*} 
\left\{\begin{array}{cc}
         \left\langle \zeta^{L},\upsilon-\omega \right\rangle_{p} \leqq 0,  \\
				 \left\langle \zeta^{U},\upsilon-\omega \right\rangle_{p} \leqq 0 .			
					\end{array}\right.
\end{eqnarray*}
Hence $\upsilon\in D$ is a solution of $(IMVVLIP)$.
\end{proof}


An interval-valued weak vector variational-like inequality problem of Minty type in terms of convexificators (for short, $(IWMVVLIP)$) for a nonsmooth case, is to find $\omega\in D$ such that the following cannot hold
$$\left\{\begin{array}{cc}
         \left\langle \zeta^{L},\upsilon-\omega \right\rangle_{p}=\left( \left\langle \zeta^{L}_{1},\upsilon-\omega \right\rangle,\left\langle \zeta^{L}_{2},\upsilon-\omega \right\rangle,...,\left\langle \zeta^{L}_{p},\upsilon-\omega \right\rangle \right) <0, \\
				  \left\langle \zeta^{U},\upsilon-\omega \right\rangle_{p}=\left( \left\langle \zeta^{U}_{1},\upsilon-\omega \right\rangle,\left\langle \zeta^{U}_{2},\upsilon-\omega \right\rangle,...,\left\langle \zeta^{U}_{p},\upsilon-\omega \right\rangle \right) <0,			
					\end{array}\right.$$
for all $\upsilon \in D$ and all $\zeta^{L}_{i}\in \partial^{*}_{*}\Gamma_{i}(\omega),~\zeta^{U}_{i}\in \partial^{*}_{*}\Gamma_{i}(\omega),~i\in \ell ={1,2,...,p}$.\\	

An interval-valued weak vector variational-like inequality problem of Stampacchia type in terms of convexificators (for short, $(IWSVVLIP)$) for a nonsmooth case, is to find $\omega\in D$ such that the following cannot hold
$$\left\{\begin{array}{cc}
         \left\langle \xi^{L},\upsilon-\omega \right\rangle_{p}=\left( \left\langle \xi^{L}_{1},\upsilon-\omega \right\rangle,\left\langle \xi^{L}_{2},\upsilon-\omega \right\rangle,...,\left\langle \xi^{L}_{p},\upsilon-\omega \right\rangle \right)  <0, \\
				  \left\langle \xi^{U},\upsilon-\omega \right\rangle_{p}=\left( \left\langle \xi^{U}_{1},\upsilon-\omega \right\rangle,\left\langle \xi^{U}_{2},\upsilon-\omega \right\rangle,...,\left\langle \xi^{U}_{p},\upsilon-\omega \right\rangle \right) <0,			
					\end{array}\right.$$
for all $\upsilon \in D$ and all $\xi^{L}_{i}\in \partial^{*}_{*}\Gamma_{i}(\upsilon),~\xi^{U}_{i}\in \partial^{*}_{*}\Gamma_{i}(\upsilon),~i\in \ell ={1,2,...,p}$.\\	

\begin{theorem} \label{T3}
Suppose $\Gamma=(\Gamma_{1},\Gamma_{2},...,\Gamma_{p}):D\rightarrow \Re^{p}$ be a interval-valued vector function such that $\Gamma_{k}:D\rightarrow \Re$ are locally Lipschitz functions at $\upsilon\in D$ and admit bounded convexificators $\partial^{*}_{*}\Gamma(\upsilon),~\forall~k\in\ell$. Also, suppose that $\Gamma$ is $\partial^{*}_{*}$-LU-convex on $D$. Then $\omega\in D$ is a weak  efficient solution of $(IVOP)$ iff $\omega$ is a solution of $(IWSVVLIP)$.
\end{theorem}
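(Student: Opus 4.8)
The plan is to prove both implications by contradiction, exploiting the structural feature that $(IWSVVLIP)$ evaluates its convexificators at the variable point $\upsilon$ rather than at $\omega$. One direction then follows immediately from the $\partial^{*}_{*}$-LU-convexity (subgradient) inequality, while the reverse direction requires the mean value theorem for convexificators (Theorem \ref{T1}) together with the monotonicity characterization of Theorem \ref{T2}, exactly in the style of the first theorem of this section.

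For the implication that a weakly efficient $\omega$ solves $(IWSVVLIP)$, I would argue contrapositively. Assuming $\omega$ is not a solution, there exist $\upsilon \in D$ and $\xi^{L}_{i},\xi^{U}_{i}\in\partial^{*}_{*}\Gamma_{i}(\upsilon)$ with $\langle \xi^{L}_{i},\upsilon-\omega\rangle<0$ and $\langle \xi^{U}_{i},\upsilon-\omega\rangle<0$ for every $i\in\ell$. Since each $\Gamma^{L}_{i}$ and $\Gamma^{U}_{i}$ is $\partial^{*}_{*}$-convex, applying the subgradient inequality with base point $\upsilon$ gives
$$\Gamma^{L}_{i}(\omega)-\Gamma^{L}_{i}(\upsilon)\geq\langle \xi^{L}_{i},\omega-\upsilon\rangle=-\langle \xi^{L}_{i},\upsilon-\omega\rangle>0,$$
and likewise for the upper part. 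Hence $\Gamma^{L}_{i}(\upsilon)<\Gamma^{L}_{i}(\omega)$ and $\Gamma^{U}_{i}(\upsilon)<\Gamma^{U}_{i}(\omega)$ for all $i$, that is, $\Gamma_{i}(\upsilon)\prec_{LU}\Gamma_{i}(\omega)$ for every $i\in\ell$, contradicting weak efficiency of $\omega$.

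For the converse, suppose $\omega$ solves $(IWSVVLIP)$ but is not weakly efficient, so there is $\upsilon\in D$ with $\Gamma_{i}(\upsilon)\prec_{LU}\Gamma_{i}(\omega)$ for all $i$. Using convexity of $D$, I would introduce $\mu(t)=\omega+t(\upsilon-\omega)\in D$ for $t\in[0,1]$ and, via Theorem \ref{T2}, obtain componentwise $\Gamma^{L}_{i}(\mu(t))-\Gamma^{L}_{i}(\omega)\leq t[\Gamma^{L}_{i}(\upsilon)-\Gamma^{L}_{i}(\omega)]$ and the analogous upper inequality. Applying Theorem \ref{T1} to each $\Gamma^{L}_{i}$ and $\Gamma^{U}_{i}$ on $[\omega,\mu(t)]$ produces parameters $\bar{t}_{i}\in(0,t)$ and elements $\bar{\xi}^{L}_{i}\in co\,\partial^{*}_{*}\Gamma^{L}_{i}(\mu(\bar{t}_{i}))$, $\bar{\xi}^{U}_{i}\in co\,\partial^{*}_{*}\Gamma^{U}_{i}(\mu(\bar{t}_{i}))$ satisfying $\langle \bar{\xi}^{L}_{i},\upsilon-\omega\rangle\leq\Gamma^{L}_{i}(\upsilon)-\Gamma^{L}_{i}(\omega)$ and the analogous upper bound. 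Since the $\bar{t}_{i}$ generally differ, I would set $\hat{t}=\min_{i}\{\bar{t}_{i}\}$ and invoke the monotonicity of $\partial^{*}_{*}\Gamma^{L}_{i}$ and $\partial^{*}_{*}\Gamma^{U}_{i}$ to transfer all these inequalities to the single base point $\mu(\hat{t})$, obtaining $\hat{\xi}^{L}_{i},\hat{\xi}^{U}_{i}$ there. Scaling by $\hat{t}$ and evaluating at $\mu(\hat{t})\in D$ then yields $\langle \hat{\xi}^{L},\mu(\hat{t})-\omega\rangle_{p}<0$ and $\langle \hat{\xi}^{U},\mu(\hat{t})-\omega\rangle_{p}<0$, contradicting that $\omega$ solves $(IWSVVLIP)$.

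The hard part will be this converse direction, and within it two points demand care. First, the mean value parameters $\bar{t}_{i}$ obtained for the different indices $i$ need not coincide, so the reduction to the common base point $\mu(\hat{t})$ through the minimum together with the monotonicity of the convexificator map must be executed carefully, following the pattern already used earlier in this section. Second, because the forbidden system of $(IWSVVLIP)$ requires strict inequalities $<0$ in every component of both the lower and the upper part, one must ensure that the strictness carried by $\Gamma_{i}(\upsilon)\prec_{LU}\Gamma_{i}(\omega)$ propagates simultaneously to each $\langle \hat{\xi}^{L}_{i},\mu(\hat{t})-\omega\rangle$ and $\langle \hat{\xi}^{U}_{i},\mu(\hat{t})-\omega\rangle$; matching the interval strict order $\prec_{LU}$ against the strict-in-all-components order of the weak inequality is the delicate bookkeeping on which the argument ultimately rests.
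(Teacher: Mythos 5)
Your overall plan is sound, but be aware that it differs genuinely from the paper's own proof in \emph{both} directions, and in each case your version is arguably the more defensible one. For the forward implication the paper does not use LU-convexity at all: it runs a Dini-derivative argument (difference quotients along $\omega+t(\upsilon-\omega)$, a liminf as $t\to 0$, then the defining inequality of a convexificator), and as written it jumps from derivative information at $\omega$ to elements of $\partial^{*}_{*}\Gamma_{i}(\upsilon)$ without justification. Your two-line contrapositive via the subgradient inequality taken at the base point $\upsilon$ is cleaner and lands exactly on the sets $\partial^{*}_{*}\Gamma_{i}(\upsilon)$ over which $(IWSVVLIP)$ quantifies, at the harmless price of invoking the convexity hypothesis the theorem supplies anyway. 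For the converse the roles are reversed: the paper disposes of it in two lines by applying LU-convexity at the candidate point, but that produces subgradients located at the candidate, which under the paper's own conventions contradicts the Minty-type system $(IWMVVLIP)$ rather than $(IWSVVLIP)$; your mean-value/monotonicity route (Theorems \ref{T1} and \ref{T2} with the $\hat{t}=\min_{i}\bar{t}_{i}$ reduction, redeployed from the first theorem of the section) correctly manufactures a violating pair consisting of the test point $\mu(\hat{t})\in D$ and subgradients $\hat{\xi}^{L}_{i},\hat{\xi}^{U}_{i}$ attached to $\mu(\hat{t})$, which is what the definition of $(IWSVVLIP)$ actually demands. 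Your heavier converse buys correctness about where the convexificators sit; the paper's buys brevity at the cost of that mismatch.

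Two loose ends remain, and you correctly identified the serious one. Since $\Gamma_{i}(\upsilon)\prec_{LU}\Gamma_{i}(\omega)$ only gives $\Gamma^{L}_{i}(\upsilon)\leq\Gamma^{L}_{i}(\omega)$ and $\Gamma^{U}_{i}(\upsilon)\leq\Gamma^{U}_{i}(\omega)$ with at least one strict inequality per index, your chain yields $\langle \hat{\xi}^{L}_{i},\mu(\hat{t})-\omega\rangle\leq\hat{t}\,[\Gamma^{L}_{i}(\upsilon)-\Gamma^{L}_{i}(\omega)]\leq 0$, and the all-components-strict forbidden system of $(IWSVVLIP)$ is \emph{not} violated when, say, $\Gamma^{L}_{i}(\upsilon)=\Gamma^{L}_{i}(\omega)$. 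You flag this but do not close it; note the paper does not close it either --- it silently rewrites failure of weak efficiency as strict inequalities in both endpoints, which does not follow from its stated definition. The clean repair is to take weak LU-efficiency with respect to the both-endpoints-strict order (as in \cite{ZHA}), after which both of your directions go through verbatim. Second, and smaller: Theorem \ref{T1} delivers $\hat{\xi}^{L}_{i}\in co\,\partial^{*}_{*}\Gamma^{L}_{i}(\mu(\hat{t}))$, whereas the system quantifies over $\partial^{*}_{*}\Gamma_{i}(\mu(\hat{t}))$ itself; since a convex combination satisfying $\langle \hat{\xi}^{L}_{i},\mu(\hat{t})-\omega\rangle<0$ must have at least one generator satisfying the same strict inequality, you can extract a genuine convexificator element componentwise --- state this explicitly, as the paper's proof of the first theorem of the section leaves the same step silent.
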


\begin{proof}
Suppose that $\omega$ is a weak efficient solution of $(IVOP)$. Then there exists no $\upsilon\in D$ such that
$$\Gamma_{i}(\upsilon)\prec_{LU} \Gamma_{i}(\omega),~\forall~i\in\ell.$$
Thus there exists no $\upsilon\in D$ such that
\begin{eqnarray*} 
\left\{\begin{array}{cc}
       (\Gamma^{L}_{1}(\upsilon)-\Gamma^{L}_{1}(\omega),\Gamma^{L}_{2}(\upsilon)-\Gamma^{L}_{2}(\omega),...,\Gamma^{L}_{p}(\upsilon)-\Gamma^{L}_{p}(\omega)) <0,\\
			 (\Gamma^{U}_{1}(\upsilon)-\Gamma^{U}_{1}(\omega),\Gamma^{U}_{2}(\upsilon)-\Gamma^{U}_{2}(\omega),...,\Gamma^{U}_{p}(\upsilon)-\Gamma^{U}_{p}(\omega)) <0.			
					\end{array}\right.
\end{eqnarray*}

Using convexity of $D$, $\omega+t(\upsilon-\omega)\in D$, for any $t\in[0,1]$, which implies that
$$\frac{f(\omega+t(\upsilon-\omega))}{t}<0,~\mbox{for}~t\in[0,1].$$
Taking limit inf as $t\to 0$, we have
\begin{eqnarray*} 
\left\{\begin{array}{cc}
         (\Gamma^{L-}_{1}(\omega,\upsilon-\omega),\Gamma^{L-}_{2}(\omega,\upsilon-\omega),...,\Gamma^{L-}_{p}(\omega,\upsilon-\omega)) <0,\\
				 (\Gamma^{U-}_{1}(\omega,\upsilon-\omega),\Gamma^{U-}_{2}(\omega,\upsilon-\omega),...,\Gamma^{U-}_{p}(\omega,\upsilon-\omega)) <0.		
					\end{array}\right.
\end{eqnarray*}
Since $\Gamma_{i}$ admit bounded convexificators $\partial^{*}_{*}\Gamma_{i}(\upsilon)$, for any $i\in \ell$, there exists no $\upsilon\in D$ such that 
$$\left\{\begin{array}{cc}
         \left\langle \xi^{L},\upsilon-\omega \right\rangle_{p} <0,\mbox{for all}~\xi^{L}_{i}\in \partial^{*}_{*}\Gamma_{i}(\upsilon),\\
				 \left\langle \xi^{U},\upsilon-\omega \right\rangle_{p} <0,\mbox{for all}~\xi^{U}_{i}\in \partial^{*}_{*}\Gamma_{i}(\upsilon).			
					\end{array}\right.$$
Hence $\upsilon$ is a solution of $(IWSVVLIP)$.\\
Conversely, suppose that $\upsilon$ is not a weak efficient solution of $(IVOP)$. Then there exists $\omega\in D$ such that
$$\left\{\begin{array}{cc}
         \Gamma^{L}(\omega)-\Gamma^{L}(\upsilon) <0, \\
				 \Gamma^{U}(\omega)-\Gamma^{U}(\upsilon) <0.			
					\end{array}\right.$$					
Using $\partial^{*}_{*}$-LU-convex of $\Gamma$ at $\upsilon$, there exists $\omega\in D$ such that
$$\left\{\begin{array}{cc}
         \left\langle \xi^{L},\upsilon-\omega \right\rangle_{p} <0,~\mbox{for all}~ \xi^{L}_{i}\in \partial^{*}_{*}\Gamma_{i}(\upsilon),\\
				 \left\langle \xi^{U},\upsilon-\omega \right\rangle_{p} <0,~\mbox{for all}~ \xi^{U}_{i}\in \partial^{*}_{*}\Gamma_{i}(\upsilon),			
					\end{array}\right.$$
which is a contradiction. Hence the proof.
\end{proof}

\begin{theorem}
Suppose $\Gamma=(\Gamma_{1},\Gamma_{2},...,\Gamma_{p}):D\rightarrow \Re^{p}$ be a interval-valued vector function such that $\Gamma_{k}:D\rightarrow \Re$ are locally Lipschitz functions at $\upsilon\in D$ and admit bounded convexificators $\partial^{*}_{*}\Gamma(\upsilon),~\forall~k\in\ell$. Also, suppose that $\Gamma$ is $\partial^{*}_{*}$-LU-convex on $D$. Then $\omega\in D$ is a solution of $(IWMVVLIP)$ iff $\omega$ is a solution of $(IWSVVLIP)$.
\end{theorem}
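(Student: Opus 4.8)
The plan is to prove the equivalence by establishing the two implications separately: the forward passage from $(IWMVVLIP)$ to $(IWSVVLIP)$ will follow cheaply from monotonicity, while the reverse passage is the real work and will be routed through weak LU-efficiency via Theorem \ref{T3}. Throughout, $\partial^{*}_{*}$-LU-convexity of $\Gamma$ means that each $\Gamma^{L}_{i}$ and $\Gamma^{U}_{i}$ is $\partial^{*}_{*}$-convex, so by Theorem \ref{T2} the set-valued maps $\partial^{*}_{*}\Gamma^{L}_{i}$ and $\partial^{*}_{*}\Gamma^{U}_{i}$ are monotone on $D$ for every $i\in\ell$; this monotonicity is the engine of the easy direction.

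First I would show that a solution of $(IWMVVLIP)$ solves $(IWSVVLIP)$, arguing by contraposition exactly as in the proof that $(ISVVLIP)$ implies $(IMVVLIP)$. If $\omega$ fails to solve $(IWSVVLIP)$, there is $\upsilon\in D$ with $\langle\xi^{L}_{i},\upsilon-\omega\rangle<0$ and $\langle\xi^{U}_{i},\upsilon-\omega\rangle<0$ for every $\xi^{L}_{i},\xi^{U}_{i}\in\partial^{*}_{*}\Gamma_{i}(\upsilon)$ and all $i$. For any $\zeta^{L}_{i}\in\partial^{*}_{*}\Gamma_{i}(\omega)$, monotonicity gives $\langle\xi^{L}_{i}-\zeta^{L}_{i},\upsilon-\omega\rangle\geq 0$, hence $\langle\zeta^{L}_{i},\upsilon-\omega\rangle\leq\langle\xi^{L}_{i},\upsilon-\omega\rangle<0$, and identically for the upper parts. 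Thus the weak Minty system holds at $\omega$ for this $\upsilon$ and every choice of $\zeta$, so $\omega$ does not solve $(IWMVVLIP)$; this is precisely the contrapositive. This step is routine.

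For the converse the monotonicity inequality points the wrong way, so I would instead invoke Theorem \ref{T3}: a solution $\omega$ of $(IWSVVLIP)$ is a weakly LU-efficient solution of $(IVOP)$, and it remains to show that weak efficiency forces $(IWMVVLIP)$. I would argue by contradiction. If $\omega$ did not solve $(IWMVVLIP)$, there would be $\upsilon\in D$ with $\langle\zeta^{L}_{i},\upsilon-\omega\rangle<0$ and $\langle\zeta^{U}_{i},\upsilon-\omega\rangle<0$ for all $\zeta^{L}_{i},\zeta^{U}_{i}\in\partial^{*}_{*}\Gamma_{i}(\omega)$ and all $i$. Since each convexificator is bounded and closed, hence compact, the suprema $\sup_{\zeta\in\partial^{*}_{*}\Gamma^{L}_{i}(\omega)}\langle\zeta,\upsilon-\omega\rangle$ and their upper analogues are strictly negative, so the defining inequality of an upper convexificator yields $\Gamma^{L-}_{i}(\omega,\upsilon-\omega)<0$ and $\Gamma^{U-}_{i}(\omega,\upsilon-\omega)<0$ for every $i$. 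Using the convexity of $D$ and the definition of the lower Dini derivative, I would then produce a step $\bar t\in(0,1)$ for which the point $\omega+\bar t(\upsilon-\omega)\in D$ satisfies $\Gamma^{L}_{i}(\omega+\bar t(\upsilon-\omega))<\Gamma^{L}_{i}(\omega)$ and $\Gamma^{U}_{i}(\omega+\bar t(\upsilon-\omega))<\Gamma^{U}_{i}(\omega)$ for all $i$, i.e. $\Gamma_{i}(\omega+\bar t(\upsilon-\omega))\prec_{LU}\Gamma_{i}(\omega)$ for every $i$, contradicting weak LU-efficiency.

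The \emph{main obstacle} is exactly this last passage: converting the purely algebraic inequalities $\langle\zeta_{i},\upsilon-\omega\rangle<0$ into a genuine simultaneous decrease of the objective values. Note that the global convexity inequality is useless here, since it only bounds $\Gamma(\upsilon)-\Gamma(\omega)$ from below; the correct tool is the convexificator--Dini estimate together with the compactness of $\partial^{*}_{*}\Gamma_{i}(\omega)$, which upgrades ``negative for every $\zeta$'' into ``supremum strictly negative.'' Two points demand care: the quantifier on $\zeta$ in the definition of $(IWMVVLIP)$ is what supplies the strict negativity of the suprema, and because the negativity of the lower Dini derivatives is a $\liminf$ statement one must argue (e.g. by shrinking along a common subsequence of step sizes) that a single $\bar t$ can be chosen making all $2p$ real functions $\Gamma^{L}_{i},\Gamma^{U}_{i}$ decrease at once, the same delicate point implicit in the proof of Theorem \ref{T3}.
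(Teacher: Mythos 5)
Your overall architecture is sound, and it is genuinely different from the paper's. The paper proves $(IWMVVLIP)\Rightarrow(IWSVVLIP)$ by a limiting argument along $\omega_{l}=\omega+t_{l}(\upsilon-\omega)$ with $t_{l}\to 0$, using boundedness and closedness of the convexificators, and then handles $(IWSVVLIP)\Rightarrow(IWMVVLIP)$ by monotonicity via Theorem \ref{T2}; you make the opposite assignment: monotonicity for $(IWMVVLIP)\Rightarrow(IWSVVLIP)$, and a detour through weak LU-efficiency (Theorem \ref{T3}) plus a Dini-derivative/compactness argument for the converse. Given the paper's definitions, in which the Minty problem takes $\zeta_{i}\in\partial^{*}_{*}\Gamma_{i}(\omega)$ and the Stampacchia problem takes $\xi_{i}\in\partial^{*}_{*}\Gamma_{i}(\upsilon)$ (the reverse of the classical naming convention), your assignment of tools is the coherent one: monotonicity gives $\langle\zeta_{i},\upsilon-\omega\rangle\leq\langle\xi_{i},\upsilon-\omega\rangle$, which transfers strict negativity from elements at $\upsilon$ down to elements at $\omega$ --- exactly your forward direction --- whereas the paper's converse tries to run this inequality the wrong way, and its forward proof even places the limit of $\xi_{l_{i}}\in\partial^{*}_{*}\Gamma_{i}(\omega_{l})$ in $\partial^{*}_{*}\Gamma_{i}(\upsilon)$ although $\omega_{l}\to\omega$. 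So your route not only differs from the paper's but sidesteps its weakest steps. Your compactness observation (a closed and bounded convexificator makes the supremum in the upper-convexificator inequality attained and strictly negative) is correct and is the same mechanism implicit in the proof of Theorem \ref{T3}; your reading of the negation of $(IWMVVLIP)$ --- there is $\upsilon$ for which the system holds for \emph{every} $\zeta$ --- is likewise the reading the paper itself uses there, so the quantifier feeding your supremum argument is consistent with the source.

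The one step that fails as you state it is the extraction of the common step $\bar t$. Negativity of the lower Dini derivatives $\Gamma^{L-}_{i}(\omega;\upsilon-\omega)$ and $\Gamma^{U-}_{i}(\omega;\upsilon-\omega)$ is only a $\liminf$ statement: each of the $2p$ scalar functions admits its own null sequence of step sizes with negative difference quotients, and these sequences may be pairwise disjoint, so ``shrinking along a common subsequence'' is not available --- $2p$ unrelated sequences need not have a common subsequence. The repair is the convexity you dismissed: $\partial^{*}_{*}$-LU-convexity on $D$ yields precisely the segment inequality $\Gamma^{L}_{i}(\omega+t(\upsilon-\omega))-\Gamma^{L}_{i}(\omega)\leq t\,[\Gamma^{L}_{i}(\upsilon)-\Gamma^{L}_{i}(\omega)]$ that the paper extracts from Theorem \ref{T2} in the proof of Theorem 4.1, and this makes the quotient $t\mapsto[\Gamma^{L}_{i}(\omega+t(\upsilon-\omega))-\Gamma^{L}_{i}(\omega)]/t$ nondecreasing in $t$; hence the $\liminf$ is a genuine limit and infimum, and its strict negativity forces the quotient to be negative for \emph{all} sufficiently small $t>0$. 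Each function then decreases on a whole interval $(0,t_{i})$, and any $\bar t<\min_{i}t_{i}$ serves for all $2p$ functions simultaneously. You are right that the convexity inequality at $\omega$ cannot bound $\Gamma(\upsilon)-\Gamma(\omega)$ from above, but its role here is different: it disciplines the difference quotients near $t=0$. With that substitution in place of the subsequence argument, your proof goes through.
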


\begin{proof}
Suppose that $\omega$ is a solution of $(IWMVVLIP)$. Consider any sequence $\{t_{l}\}$ with $t_{l}\in (0,1]$ such that $t_{l}\to 0$ as $l\to \infty$. As $D$ is convex, we have 
$\omega_{l}:=\omega+t_{l}(\upsilon-\omega)\in D,~\mbox{for all}~\upsilon\in D.$
Since $\omega$ is a solution of $(IWMVVLIP)$, for $\xi^{L}_{p}\in \partial^{*}_{*}\Gamma_{p}(\omega_{l})$, $\xi^{U}_{p}\in \partial^{*}_{*}\Gamma_{p}(\omega_{l})$ there exists no $\upsilon\in D$ such that
$$\left\{\begin{array}{cc}
         \left\langle \xi^{L}_{p},\omega_{l}-\upsilon \right\rangle_{p} <0,\\
				 \left\langle \xi^{U}_{p},\omega_{l}-\upsilon \right\rangle_{p} <0.		
					\end{array}\right.$$
Since each $\Gamma_{i}$ is locally Lipschitz and admit bounded convexificators on D, there exists $k>0$ such that $\|\xi_{l_{i}}\|\leq k$, which means that the sequence ${\xi_{l_{i}}}$	converges to $\xi_{i}$ for all $i\in \ell$. Also the convexificators $\partial^{*}_{*}\Gamma_{i}(\upsilon)$ are closed for all $i\in \ell$ and $\upsilon\in D$, it follows that $\omega_{l}\to\omega$	and $\xi_{l_{i}}\to \bar{\xi_{i}}$ as $l\to \infty$ with $\bar{\xi_{i}}\in \partial^{*}_{*}\Gamma_{i}(\upsilon)$ for all $i\in \ell$.	Thus for $\xi^{L}\in \partial^{*}_{*}\Gamma(\upsilon)$ and $\xi^{U}\in \partial^{*}_{*}\Gamma(\upsilon)$ there exists no $\upsilon\in D$ such that 	
$$\left\{\begin{array}{cc}
         \left\langle \xi^{L},\upsilon-\omega \right\rangle_{p} <0,\\
				 \left\langle \xi^{U},\upsilon-\omega \right\rangle_{p} <0.			
					\end{array}\right.$$
Hence $\omega$ is a solution of $(IWSVVLIP)$.\\
Conversely, suppose that $\omega$ is a solution of $(IWSVVLIP)$. Then, for any $\upsilon\in D$ and $\xi^{L}\in \partial^{*}_{*}\Gamma(\upsilon)$, $\xi^{U}\in \partial^{*}_{*}\Gamma(\upsilon)$ the following cannot hold
$$\left\{\begin{array}{cc}
         \left\langle \xi^{L},\upsilon-\omega \right\rangle_{p} <0,\\
				 \left\langle \xi^{U},\upsilon-\omega \right\rangle_{p} <0.			
					\end{array}\right.$$
Since $\Gamma$ is $\partial^{*}_{*}$-LU-convex on $D$. Therefore $\Gamma^{L}$ and $\Gamma^{U}$ are $\partial^{*}_{*}$-convex, so by Theorem (\ref{T2}) $\partial^{*}_{*}\Gamma^{L}$ and $\partial^{*}_{*}\Gamma^{U}$ are monotone over D, which implies that, for any $\upsilon\in D$ and $\xi^{L}\in \partial^{*}_{*}\Gamma(\omega)$, $\xi^{U}\in \partial^{*}_{*}\Gamma(\omega)$ the following cannot hold 	
$$\left\{\begin{array}{cc}
         \left\langle \xi^{L},\upsilon-\omega \right\rangle_{p} <0,\\
				 \left\langle \xi^{U},\upsilon-\omega \right\rangle_{p} <0.			
					\end{array}\right.$$
Hence $\omega$ is a solution of $(IWMVVLIP)$.												
\end{proof}
\begin{theorem}
Suppose $\Gamma=(\Gamma_{1},\Gamma_{2},...,\Gamma_{p}):D\rightarrow \Re^{p}$ be a interval-valued vector function such that $\Gamma_{k}:D\rightarrow \Re$ are locally Lipschitz functions at $\upsilon\in D$ and admit bounded convexificators $\partial^{*}_{*}\Gamma(\upsilon),~\forall~k\in\ell$. Also, suppose that $\Gamma$ is strictly $\partial^{*}_{*}$-LU-convex on $D$. Then $\omega\in D$ is an efficient solution of $(IVOP)$ iff $\omega$ is a weak LU-efficient solution of $(IVOP)$.
\end{theorem}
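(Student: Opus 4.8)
The plan is to prove the two implications separately, observing that only the reverse one genuinely uses strict convexity; the forward one holds for any interval-valued vector function.

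First I would dispose of the implication that an efficient (LU-efficient) solution is weakly LU-efficient, by contraposition and with no convexity at all. If $\omega$ is not weakly LU-efficient, there is $\upsilon\in D$ with $\Gamma_i(\upsilon)\prec_{LU}\Gamma_i(\omega)$ for every $i\in\ell$. Since $\prec_{LU}$ implies $\preceq_{LU}$, this yields $\Gamma_k(\upsilon)\preceq_{LU}\Gamma_k(\omega)$ for all $k\in\ell$ together with strict domination in at least one (in fact every) component, which is exactly the meaning of $\Gamma(\upsilon)\prec_{LU}\Gamma(\omega)$. Hence $\omega$ fails to be efficient, proving the contrapositive.

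For the reverse implication I would argue by contradiction. Assume $\omega$ is weakly LU-efficient but not efficient, so there exists $\upsilon\in D$ with $\Gamma(\upsilon)\prec_{LU}\Gamma(\omega)$, that is
$$\Gamma_k^{L}(\upsilon)\leq\Gamma_k^{L}(\omega),\quad \Gamma_k^{U}(\upsilon)\leq\Gamma_k^{U}(\omega),\qquad k\in\ell,$$
with at least one of these scalar inequalities strict. In particular $\Gamma(\upsilon)\neq\Gamma(\omega)$, so $\upsilon\neq\omega$. Using convexity of $D$ I would introduce the segment point $\mu(t)=\omega+t(\upsilon-\omega)\in D$ for $t\in(0,1)$, which for such $t$ is distinct from both endpoints. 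The heart of the argument is to convert the strict $\partial^{*}_{*}$-LU-convexity into a strict Jensen-type inequality along this segment: writing the strict subgradient inequality for each component $\Gamma_i^{L}$ (and $\Gamma_i^{U}$) at the base point $\mu(t)$, evaluated at $\upsilon$ and at $\omega$, and forming the combination weighted by $t$ and $1-t$ — using $\upsilon-\mu(t)=(1-t)(\upsilon-\omega)$ and $\omega-\mu(t)=-t(\upsilon-\omega)$ so that the inner-product terms cancel — gives
$$\Gamma_i^{L}(\mu(t))<t\,\Gamma_i^{L}(\upsilon)+(1-t)\Gamma_i^{L}(\omega),\qquad \Gamma_i^{U}(\mu(t))<t\,\Gamma_i^{U}(\upsilon)+(1-t)\Gamma_i^{U}(\omega),$$
for every $i\in\ell$ and $t\in(0,1)$. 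Combining these with $\Gamma_i^{L}(\upsilon)\leq\Gamma_i^{L}(\omega)$ and $\Gamma_i^{U}(\upsilon)\leq\Gamma_i^{U}(\omega)$ produces $\Gamma_i^{L}(\mu(t))<\Gamma_i^{L}(\omega)$ and $\Gamma_i^{U}(\mu(t))<\Gamma_i^{U}(\omega)$ for all $i$; by case (a) of the definition of $\prec_{LU}$ this means $\Gamma_i(\mu(t))\prec_{LU}\Gamma_i(\omega)$ for every $i\in\ell$, contradicting the weak LU-efficiency of $\omega$.

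I expect the main obstacle to be exactly this upgrading step. The non-efficiency hypothesis only supplies non-strict componentwise inequalities $\Gamma_k(\upsilon)\preceq_{LU}\Gamma_k(\omega)$ with strictness in a single component, whereas violating weak efficiency requires strict domination in all components simultaneously. Strict convexity is precisely what injects uniform strictness along the open segment, so the care points are (i) deriving the strict Jensen inequality from the convexificator subgradient form rather than assuming it, and (ii) checking that the resulting pair of strict scalar inequalities on $\Gamma_i^{L}$ and $\Gamma_i^{U}$ genuinely falls under the strict interval order $\prec_{LU}$ for every index $i$.
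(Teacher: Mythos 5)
Your proposal is correct, but it takes a genuinely different route from the paper's. For the nontrivial direction the paper stays inside its variational-inequality framework: from $\Gamma(\upsilon)\prec_{LU}\Gamma(\omega)$ and strict $\partial^{*}_{*}$-LU-convexity it deduces, for all $\xi^{L},\xi^{U}\in\partial^{*}_{*}\Gamma(\omega)$,
\[
\left\langle \xi^{L},\upsilon-\omega \right\rangle_{p}<0,\qquad
\left\langle \xi^{U},\upsilon-\omega \right\rangle_{p}<0,
\]
so that $\omega$ fails to solve $(IWMVVLIP)$, and then invokes the previously established equivalences between weak LU-efficiency and the weak Stampacchia/Minty problems (Theorem \ref{T3} together with the $(IWMVVLIP)\Leftrightarrow(IWSVVLIP)$ result) to contradict weak efficiency. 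You instead argue entirely within the optimization problem: you derive a strict Jensen-type inequality along the segment $\mu(t)=\omega+t(\upsilon-\omega)$ by writing the strict subgradient inequality at $\mu(t)$ with one and the same $\zeta$ against both endpoints and cancelling the inner products via $\upsilon-\mu(t)=(1-t)(\upsilon-\omega)$ and $\omega-\mu(t)=-t(\upsilon-\omega)$, then combine it with the non-strict domination $\Gamma_i^{L}(\upsilon)\leq\Gamma_i^{L}(\omega)$, $\Gamma_i^{U}(\upsilon)\leq\Gamma_i^{U}(\omega)$ to get $\Gamma_i(\mu(t))\prec_{LU}\Gamma_i(\omega)$ for every $i$, contradicting weak LU-efficiency at the feasible point $\mu(t)$. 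Both arguments are sound, and the trade-off is clear: the paper's proof is two lines long but leans on its earlier VVI theorems (and, as printed, contains variable slips such as ``there exist $\omega\in D$''), and it only needs strictness of the convexity at the single point $\omega$; your proof needs strict convexity along the open segment (covered by the blanket hypothesis of strict $\partial^{*}_{*}$-LU-convexity on $D$) plus convexity of $D$ and existence of convexificators at the segment points, but it is self-contained and makes the key mechanism explicit — strict convexity is exactly what upgrades strictness in one component at the endpoints to uniform strictness in all components along the interior of the segment. You were also right to flag the two care points: $\upsilon\neq\omega$ (guaranteed since $\prec_{LU}$ forces $\Gamma(\upsilon)\neq\Gamma(\omega)$), which the tacitly pointwise-strict subgradient inequality requires, and the verification that the resulting pair of strict scalar inequalities falls under case (a) of the definition of $\prec_{LU}$.
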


\begin{proof}
Every efficient solution is a weak efficient solution of $(IVOP)$.\\
Conversely, suppose that $\omega$ is a weak LU-efficient solution of $(IVOP)$, but not a LU-efficient solution of $(IVOP)$. Then there exists $\upsilon\in D$ such that 
$$\Gamma(\upsilon)\prec_{LU} \Gamma(\omega).$$
				  
Since $\Gamma$ is strictly $\partial^{*}_{*}$-LU-convex on $D$. Therefore $\Gamma^{L}$ and $\Gamma^{U}$ are $\partial^{*}_{*}$ strictly convex, so for any $\xi^{L}\in \partial^{*}_{*}\Gamma(\omega)$ and  $\xi^{U}\in \partial^{*}_{*}\Gamma(\omega)$, there exist $\omega\in D$ 	
$$\left\{\begin{array}{cc}
         \left\langle \xi^{L},\upsilon-\omega \right\rangle_{p} <0,\\
				 \left\langle \xi^{U},\upsilon-\omega \right\rangle_{p} <0,			
					\end{array}\right.$$	
which is not a solution of $(IWMVVLIP)$. By Theorem (\ref{T3}), $\upsilon$ is not a weak efficient solution of $(IVOP)$, which is a contradiction. Hence the proof. 									
\end{proof}

\section{Conclusion}
In this paper, we have considered a class of nonsmooth $(IVOP)$ and Stampacchia and Minty type vector variational inequalities in terms of convexificators which are weaker version of the notion of subdifferentials. We have established the relationships among the solutions of the Stampacchia and Minty type vector variational inequalities and LU-efficient solution of $(IVOP)$ involving locally Lipschitz functions. In addition, we have also established the relationships between weak LU-efficient solution of $(IVOP)$ and weak versions of the Stampacchia and Minty vector variational inequalities.

\end{document}